\newtheorem{theorem}{Theorem}[section]
\newtheorem{cor}[theorem]{Corollary}
\theoremstyle{definition}
\theoremstyle{remark}
\numberwithin{equation}{section}
\newcommand{\R}{\mathbf{R}}                             
\newcommand{\N}{\mathbf{N}}                             
\newcommand{\C}{\mathbf{C}}
\newcommand{\calI}{\mathcal{I}}
\newcommand{\frakI}{\mathfrak{I}}
\newcommand{\rmd}{\mathrm{d}}
\newcommand{\scrC}{\mathscr{C}}
\newcommand{\scrL}{\mathscr{L}}
\newcommand{\bfone}{\mathbf{1}}
\newcommand{\dist}{\operatorname{dist}}
\renewcommand{\Re}{\operatorname{Re}}
\newcommand{\ad}{\operatorname{ad}}
\newcommand{\Tr}{\operatorname{Tr}}
\newcommand{\Vol}{\operatorname{Vol}}
\newcommand{\ov}[1]{\overline{#1}}
\newcommand{\bsm}{\left(\begin{smallmatrix}}
\newcommand{\esm}{\end{smallmatrix}\right)}
\newcommand{\bpm}{\begin{pmatrix}}
\newcommand{\epm}{\end{pmatrix}}
\newcommand{\angles}[1]{\left\langle #1\right\rangle}
\newcommand{\pars}[1]{\left( #1\right)}
\newcommand{\dmu}{\:\mathrm{d}\mu}
\begin{document}

\title{Spectral Asymptotics for Operators of H\"{o}rmander Type}

%    Information for first author
\author{Andrew L. Ursitti}
%    Address of record for the research reported here
\address{Department of Mathematics, Purdue University, West Lafayette, Indiana 47907}
%    Current address
\curraddr{Department of Mathematics,
Purdue University, West Lafayette, Indiana 47906}
\email{aursitti@math.purdue.edu}
%    \thanks will become a 1st page footnote.
%\thanks{The first author was supported in part by NSF Grant \#000000.}

%    Information for second author
%\author{Author Two}
%\address{Mathematical Research Section, School of Mathematical Sciences,
%Australian National University, Canberra ACT 2601, Australia}
%\email{two@maths.univ.edu.au}
%\thanks{Support information for the second author.}

%    General info
%\subjclass{Primary 54C40, 14E20; Secondary 46E25, 20C20}
\date{October 31, 2013.}

%\dedicatory{This paper is dedicated to our advisors.}

%\keywords{Differential geometry, algebraic geometry}

\begin{abstract}
An asymptotic equality of the form $\operatorname{Tr}_{L^2} e^{-t(L+V)}=Ct^{-\alpha}+o(t^{-\alpha})$ as $t\rightarrow 0$ is given for the trace of the heat semigroup generated by operators on compact manifolds of the form $L+V=-\sum_{i=1}^{m}X_i^2 +\sum_{i,j=1}^mc_{ij}[X_i,X_j]+\sum_{i=1}^m \gamma_iX_i+V$ for smooth real potentials $(V)$ which satisfy H\"{o}rmander's bracket-generating condition. In the self-adjoint case, a Weyl law is proved for the spectra of such operators. Analogous results are proved for the Dirichlet boundary value problem.
\end{abstract}

\maketitle

%% main text
\section{Introduction}

In this article we shall be concerned with a general class of second-order scalar differential operators on a compact manifold $M$ of dimension $n\geq 3$.  More precisely, we will consider operators $L$ of the form 
\begin{equation}\label{asdfjkl;100}
L=-\sum_{i=1}^{m}X_i^2 +\sum_{i,j=1}^mc_{ij}[X_i,X_j]+\sum_{i=1}^m \gamma_iX_i
\end{equation}
on $M$, where $X_i$ is a smooth real vector field on $M$ and $c_{ij},\gamma_i\in\scrC^\infty(M;\R)$ for every $i,j\in\{1,\ldots, m\}$. More generally, we will also consider operators of the form $L+V$ where $L$ is as described above and $V\in\scrC^\infty(M;\R)$. The presence of the arbitrary smooth potential $V$ renders unnecessary the separate consideration of those operators which can be written in the form (\ref{asdfjkl;100}) only locally, because any such operator can be written \emph{globally} as $L+V$ where $L$ and $V$ are as described above. Here and below, $\mu$ will denote a generic volume density for $M$ which is always assumed to be smooth and nondegenerate but which is subject to no additional assumptions. All notions of adjoint will refer to the Hermitian scalar product in $L^2(M,\mu)$. 

Concerning such operators we will assume that the H\"{o}rmander condition holds: for all $x\in M$, any tangent vector at $x\in M$ is the restriction to $T_xM$ of an element of the Lie algebra generated over $\R$ by $\{X_1,\ldots, X_{m}\}$. For a given word $I=(i_1,\ldots, i_j)\in \{1,\ldots, m\}^j$ we define $|I|=j$. Additionally, we define the vector field $X_{I}$ and the subspace $T_{x,j}M\subset T_xM$ for any point $x\in M$ by
\[
X_{I}=\ad(X_{i_1})\cdots \ad(X_{i_{j-1}})X_{i_j}\quad \mbox{and}\quad T_{x,j}M =\angles{\{X_{I}|_x\in T_xM:|I|\leq j\}},
\]
where $\angles{\cdot}$ indicates linear algebraic closure. 

The function $d:M\times \N\rightarrow \N$ given by $d(x,j) =\dim T_{x,j}M$ is nondecreasing in $j$ for fixed $x$ and lower semi-continuous in $x$ for fixed $j$. Additionally, the H\"{o}rmander condition can be restated as follows: there exists an integer function $\tau:M\rightarrow\N$ such that $d(x, \tau(x)) = n$; it will be assumed that $\tau$ is minimal among all functions satisfying this property. A limit point argument using compactness and lower semi-continuity shows that $\tau$ is bounded. In other words there is a \emph{global} maximum degree $\tau_L=\max \tau(\cdot)$ of iterated Lie brackets of the $X_i$ necessary to generate any given tangent space of $M$.

The \emph{homogeneous dimension} $Q:M\rightarrow \N$ is defined by 
\[
Q(x)=d(x,1)+\sum_{j=2}^{\infty}j[d(x,j)-d(x,j-1)]=\tau_L n-\sum_{j=1}^{\tau_L-1}d(x,j).
\]

The expression on the right shows that the function $Q$ is upper semi-continuous and bounded. Furthermore, within any given level set of $Q$ the level sets of $d(\cdot, j)$ must be relatively open and therefore topologically separated. In other words, the integer vector $(d(\cdot,1),\ldots, d(\cdot, \tau_L-1))$ is constant on the connected components of the level sets of $Q$. The significance of $Q(x)$ can be summarized briefly as follows: in addition to the standard pointwise tangent space to the manifold $M$, the nature of the operator $L$ implies the existence of a tangential nilpotent Lie group at every point $x\in M$, and $Q(x)$ is the Hausdorff dimension of the natural metric structure on this group - a notion which is ubiquitous in the literature on operators of this type (\cite{MR0436223}).

H\"{o}rmander's condition along with the smoothness of $V$ imply that $L+V$ is hypoelliptic and it therefore possesses a smooth, positive heat kernel $E^{L,\mu}_V(\cdot;\cdot,\cdot):M\times M\times (0,\infty)\rightarrow (0,\infty)$. In general, $E^{L,\mu}_V(\cdot;\cdot,\cdot)$ will not be symmetric in the physical variables so we must specify their roles: it will be assumed that for every $x\in M$, $E^{L,\mu}_V(x;\cdot,\cdot)$ solves $(L+V+\partial_t)E^{L,\mu}_V(x;\cdot,\cdot)\equiv 0$ on $M\times (0,\infty)$ and converges to the Dirac mass $\delta_x$ as $t\rightarrow 0$ with respect to the density $\mu$.

Concerning the small time asymptotics of $E^{L,\mu}_V$, the following result has been established using probabilistic techniques by Takanobu (\cite{MR944857}), and Ben Arous and L{\'e}andre (\cite{MR1128069} and \cite{MR1133372}) in the case $V\equiv 0$ and also independently by Ben Arous (\cite{MR1011978}) in the case where $-L$ is a pure sum of squares (i.e. without drift or potential).
\begin{theorem}[\cite{MR944857},\cite{MR1128069},\cite{MR1133372}]\label{asdfjkl;101} There exist sequences of measurable functions $c^{L,\mu}_i:M\rightarrow \R$ and $r^{L,\mu}_i:M\times(0,\infty)\rightarrow \R$, $i\geq 0$, such that for $N\geq 0$
\begin{equation}\label{asdfjkl;102}
t^{Q(x)/2}E^{L,\mu}_0(x;x,t)=\sum_{i=0}^Nc^{L,\mu}_i(x)t^i+r^{L,\mu}_N(x,t)
\end{equation}
and $r^{L,\mu}_N(x,t)=o(t^N)$ pointwise at every $x\in M$ as $t\rightarrow 0$. In addition, $c^{L,\mu}_0$ is strictly positive.
\end{theorem}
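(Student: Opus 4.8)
The plan is to localize near an arbitrary point $x\in M$, replace $L$ there by its nilpotent approximation, extract \eqref{asdfjkl;102} from a parabolic rescaling combined with a Duhamel parametrix, and identify the leading coefficient with the on-diagonal value of a genuine heat kernel on a graded nilpotent group, whence its positivity. The probabilistic arguments of \cite{MR944857}, \cite{MR1128069}, \cite{MR1133372} run parallel to this (stochastic Taylor expansion of the pinned diffusion, Malliavin integration by parts, and the same rescaling); I sketch the analytic route.

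First I would fix $x$ and pass to a coordinate chart. Since $L$ is subelliptic its heat kernel obeys off-diagonal bounds that are Gaussian in the Carnot--Carath\'eodory distance, so $E^{L,\mu}_0(x;x,t)$ is determined modulo $O(t^{\infty})$ by the $X_i$, the $c_{ij}$, the $\gamma_i$ and $\mu$ in an arbitrarily small neighborhood of $x$; a partition of unity reduces matters to one chart. To sidestep the fact that $x$ need not be a point where $j\mapsto d(\cdot,j)$ is locally constant, I would apply the Rothschild--Stein lifting theorem: after adjoining finitely many variables the $X_i$ lift to vector fields that are free of step $\tau_L$ over $x$, and the diagonal kernel of $L$ is recovered by integrating the lifted heat kernel over the fibre over $x$. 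After relabelling I may thus choose privileged coordinates $(y_1,\dots,y_n)$ centered at $x$ with weights $w_1\le\cdots\le w_n$, $\sum_k w_k=Q(x)$, anisotropic dilations $\delta_s$, and the associated graded nilpotent group $\mathbf G_x$ carrying the nilpotent approximation $\hat L_x=-\sum_i\hat X_i^2+\sum_{i,j}c_{ij}(x)[\hat X_i,\hat X_j]$, which is left-invariant and $\delta_s$-homogeneous of degree $2$; the bracket term shares the weight of $\sum X_i^2$ and so survives into $\hat L_x$, whereas the drift $\sum\gamma_i X_i$ (and any potential) has lower weight.

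Next comes the rescaling. Conjugating $L$ by $\delta_s$ and multiplying by $s^2$ produces an operator $L_s=\hat L_x+sB_1+s^2B_2+\cdots$, each $B_j$ a differential operator with smooth bounded coefficients, $\delta_s$-homogeneous of degree $2-j$. Rescaling time by $t\mapsto s^2t$ and renormalizing densities by the Jacobian $s^{Q(x)}$ of $\delta_s$, the quantity $s^{Q(x)}E^{L,\mu}_0(x;x,s^2)$ becomes, as a function of the new time, the diagonal value of the fundamental solution of $\partial_\sigma+L_s$. A Duhamel expansion about the semigroup $e^{-\sigma\hat L_x}$ --- whose kernel $\hat p_\sigma$ exists, is smooth and strictly positive by H\"ormander's theorem applied to $\hat L_x$, and scales as $\hat p_\sigma(e,e)=\sigma^{-Q(x)/2}\hat p_1(e,e)$ --- then yields, on setting $s=\sqrt t$, an asymptotic series $t^{Q(x)/2}E^{L,\mu}_0(x;x,t)\sim\sum_{j\ge0}c'_j(x)\,t^{j/2}$ whose coefficients are iterated time-convolutions of $\hat p$ with the $B_k$ (together with density corrections), hence measurable in $x$ --- in fact smooth on each stratum where $d(\cdot,j)$ is constant. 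The coefficient $c'_0(x)$ is a positive multiple of $\hat p_1(e,e)>0$.

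It remains to show the half-integer powers vanish on the diagonal, i.e. $c'_j(x)=0$ for odd $j$, so that $c^{L,\mu}_i(x)=c'_{2i}(x)$; this is the genuinely delicate point and I expect it to be the main obstacle. The mechanism is a parity argument: a $\delta_s$-homogeneous operator of odd degree $2-j$ is ``odd'', and in the $j$th Duhamel term --- an integral over intermediate times and points of $\hat p$'s interlaced with the $B_k$'s, evaluated at coincident endpoints $e$ --- the integrand is reversed in sign by the symmetry of the nilpotent model under the group inversion of $\mathbf G_x$ together with the time reversal $\sigma\mapsto1-\sigma$ of the bridge factor $\hat p_\sigma(e,\cdot)\hat p_{1-\sigma}(\cdot,e)$; this is the same cancellation that removes odd powers of $\sqrt t$ from the Riemannian on-diagonal expansion, and it is nontrivial here because the square of the Carnot--Carath\'eodory distance is not smooth. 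Making it rigorous requires tracking the action of inversion on each $B_k$ (odd degree $\Leftrightarrow$ odd sign), using unimodularity of $\mathbf G_x$, and treating the non-self-adjoint pieces $c_{ij}(x)[\hat X_i,\hat X_j]$ and $\gamma_i$ by comparison with the adjoint semigroup, which carries the analogous expansion. Finally, the remainder bound $r^{L,\mu}_N(x,t)=o(t^N)$ follows from subelliptic (Folland--Stein) a priori estimates uniform along the family $L_s$, valid because the $L_s$ are uniformly hypoelliptic of step $\tau_L$; this completes the proof.
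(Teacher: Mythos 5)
The first thing to say is that the paper contains no proof of Theorem \ref{asdfjkl;101}: it is imported wholesale from Takanobu and Ben Arous--L\'eandre, whose arguments are probabilistic (stochastic Taylor expansion of the parabolically rescaled diffusion, followed by Malliavin integration by parts to convert the expansion of the process into an expansion of its density at the starting point). Your sketch is therefore not the paper's argument done differently but an outline of a genuinely different, analytic route. As an outline it assembles the right ingredients: localization by Gaussian off-diagonal bounds, a nilpotent approximation $\hat L_x$ that retains the weight-two bracket drift, parabolic rescaling and a Duhamel expansion around the homogeneous model, a parity cancellation to kill the half-integer powers, and identification of $c^{L,\mu}_0(x)$ with a model heat kernel value at the identity. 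You are also right that the special form of the drift is what saves positivity; the paper's footnote makes exactly this point.

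That said, the two steps you pass over most quickly are precisely where the theorem is hard, so the proposal has genuine gaps. First, at a point where $j\mapsto d(\cdot,j)$ is not locally constant, Rothschild--Stein lifting reduces you to a free system, but recovering the full expansion \emph{with remainder} for $E^{L,\mu}_0(x;x,t)$ then requires integrating $\tilde p_t((x,0),(x,u))$ over the entire fibre; this needs a uniform \emph{off-diagonal} expansion in $u$ together with Gaussian tail control to justify termwise integration and to see that the fibre integral contributes the correct power of $t$ at every order, not merely at the leading one. Nothing in your sketch addresses this, and it is exactly the non-equiregular difficulty that the analytic literature (M\'etivier, Beals--Greiner, Ponge) avoids by assuming $Q$ constant and that the probabilistic proofs were designed to overcome. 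Second, the symmetry you invoke for the parity argument is not the right one: group inversion acts by $-1$ on every layer of $\mathbf{G}_x$ and is only an anti-automorphism. The cancellation comes instead from the graded automorphism induced by $\hat X_i\mapsto -\hat X_i$, which acts by $(-1)^w$ on the layer of weight $w$, fixes $\hat L_x$ (brackets are even), preserves Haar measure and the identity, and multiplies every $\delta_s$-homogeneous perturbation of degree $2-j$ by $(-1)^j$ because homogeneity degree and parity agree mod $2$; in the probabilistic proofs this is the invariance of Wiener measure under $w\mapsto -w$. Finally, strict positivity of $\hat p_1(e,e)$ does not follow from H\"ormander's theorem, which gives smoothness only; it requires a controllability or support-theorem argument for the model diffusion, and this is where the hypothesis that the drift lies in the Lie algebra generated by the $X_i$ actually enters. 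None of these is a wrong turn, but each needs a substantive argument before the sketch becomes a proof.
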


It should be mentioned at this point that in \cite{MR1128069}, \cite{MR1133372}, and \cite{MR944857} this result is proved for generic operators of the form $-\sum_{i=1}^m X_i^2+Y$ without any restrictions on $Y$ other than the requirement that the entire set \newpage $\{Y,X_1,\ldots, X_m\}$ must satisfy the H\"{o}rmander condition.\footnote{In this case one must define $Q$ using an alternative definition of $T_{x,j}M$ which gives weight two to all or part of the drift vector $Y$ (for instance, in the case of the full heat operator $\scrL=L+V+\partial_t$, it would be appropriate to treat $\partial_t$ as an operator of order two). This amounts to a modification of the standard power-order filtration of differential operators and is the correct way to account for the anisotropy produced by the mixed orders of differentiation when $Y$ cannot be written using iterated commutators of the $X_i$.} However, the positivity of $c^{L,\mu}_0$ relies on the special form of $Y$ which we have taken as a hypothesis. In fact, the heat kernel is quite pathological if $Y$ is not of this form since in that case the drift will overpower the diffusion and cause $E^{L,\mu}_0(x;x,t)$ to \emph{vanish} as $t\rightarrow 0$ faster than any power of $t$. See \cite{MR1128069},\cite{MR1133372} and \cite{MR962859} for details on this point.

Let $Q_L=\max  Q(\cdot)$. It will be useful to reinterpret Theorem \ref{asdfjkl;101} by using (\ref{asdfjkl;102}) to express $t^{Q_L/2}E^{L,\mu}_0(x;x,t)$ rather than $t^{Q(x)/2}E^{L,\mu}_0(x;x,t)$. This is done as follows: if $i=0,1,2,3,\ldots$ then let $c_i^{L,\mu}$ be given by Theorem \ref{asdfjkl;101}, if $i$ is a half-integer or a negative integer then set $c_i^{L,\mu}\equiv 0$ and define for $j\geq 0$, $\epsilon_{j/2}^{L,\mu}=\sum_{k=n}^{Q_L} c_{(j+k-Q_L)/2}^{L,\mu}\bfone_{\{Q=k\}}$. With these definitions, $\epsilon_{j/2}^{L,\mu}\equiv 0$ if $j<Q_L-Q(x)$, $\epsilon_{(Q_L-Q(x))/2}^{L,\mu}(x)=c_0^{L,\mu}(x)>0$ and (\ref{asdfjkl;102}) implies
\begin{equation}\label{asdfjkl;124}
t^{Q_L/2}E^{L,\mu}_0(x;x,t)=\sum_{j=0}^N \epsilon_{j/2}^{L,\mu}(x)t^{j/2}+R_{N/2}^{L,\mu}(x,t)
\end{equation}
with $R_{N/2}^{L,\mu}(x,t)=o(t^{ N/2})$ pointwise as $t\rightarrow 0$.

Denote $F_k=\{x:Q(x)\geq k\}$ which is a closed set due to the upper semi-continuity of $Q$. As a consequence of (\ref{asdfjkl;124}) we have
\begin{equation}\label{asdfjkl;103}
\lim_{t\rightarrow 0} t^{Q_L/2}E^{L,\mu}_0(x;x,t)=\epsilon^{L,\mu}_0(x)=c^{L,\mu}_0(x)\bfone_{F_{Q_L}}(x)
\end{equation}
for every $x\in M$. In general this limit will not be achieved uniformly. However, 
in section 2 it will be shown (Corollary \ref{asdfjkl;115}) that $t^{Q_L/2}E^{L,\mu}_0(x;x,t)$ is bounded above by an absolute constant $k_L$, uniformly in $(x,t)\in M\times (0,1]$. On combining this estimate with (\ref{asdfjkl;103}) we have
\begin{equation}\label{asdfjkl;106}
\lim_{t\rightarrow 0}\int t^{Q_L/2}E^{L,\mu}_0(x;x,t) \dmu(x)=\int \epsilon^{L,\mu}_0\dmu
\end{equation}
by dominated convergence.

In section 3 it will be shown that (\ref{asdfjkl;106}) holds, with the same righthand side, with $E^{L,\mu}_V$ in place of $E^{L,\mu}_0$ for any nonzero $V\in\scrC^\infty(M;\R)$. The resulting limit can be recast as a theorem on the asymptotic behavior of the heat trace for $L+V$: 

\begin{theorem} \label{asdfjkl;114} The asymptotic equality 
\begin{equation} 
\Tr_{L^2}e^{-t(L+V)}=\pars{\int \epsilon^{L,\mu}_0\dmu}t^{-Q_L/2}+o(t^{-Q_L/2})
\end{equation}
holds as $t\rightarrow 0$.
\end{theorem}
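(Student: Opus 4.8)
The plan is to reduce the statement to a pointwise heat-kernel asymptotic plus a dominated-convergence argument, and to control the effect of the potential by Duhamel's formula. First, since $M$ is compact and $L+V$ is hypoelliptic, for $t>0$ the operator $e^{-t(L+V)}=e^{-(t/2)(L+V)}\circ e^{-(t/2)(L+V)}$ is a product of Hilbert--Schmidt operators --- each factor has the smooth, hence square-integrable, kernel $E^{L,\mu}_V(\cdot;\cdot,t/2)$ on the compact product $M\times M$ --- so it is trace class with continuous kernel $E^{L,\mu}_V(\cdot;\cdot,t)$ and
\[
\Tr_{L^2}e^{-t(L+V)}=\int_M E^{L,\mu}_V(x;x,t)\dmu(x).
\]
Hence the theorem is equivalent to the claim that $t^{Q_L/2}\int_M E^{L,\mu}_V(x;x,t)\dmu(x)\to\int_M\epsilon^{L,\mu}_0\dmu$ as $t\to 0$, and I would obtain this from the pointwise convergence $t^{Q_L/2}E^{L,\mu}_V(x;x,t)\to\epsilon^{L,\mu}_0(x)$ together with a bound uniform on $M\times(0,1]$, by applying dominated convergence over the finite measure space $(M,\mu)$.

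To get the pointwise convergence I would compare $E^{L,\mu}_V$ with $E^{L,\mu}_0$, for which \eqref{asdfjkl;103} already gives $t^{Q_L/2}E^{L,\mu}_0(x;x,t)\to\epsilon^{L,\mu}_0(x)$ and Corollary~\ref{asdfjkl;115} gives $t^{Q_L/2}E^{L,\mu}_0(x;x,t)\le k_L$ on $M\times(0,1]$. Duhamel's principle for the initial value problems of $L+V$ and $L$ with common data $\delta_x$, evaluated on the diagonal, gives
\[
E^{L,\mu}_V(x;x,t)-E^{L,\mu}_0(x;x,t)=-\int_0^t\int_M E^{L,\mu}_0(c;x,t-s)\,V(c)\,E^{L,\mu}_V(x;c,s)\dmu(c)\,ds.
\]
Positivity of the heat kernels lets me pass to absolute values, and, writing $L+V=(L+\widetilde V)-\norms{V}_\infty$ with $\widetilde V:=V+\norms{V}_\infty\ge 0$, the same Duhamel identity for the nonnegative potential $\widetilde V$ together with positivity yields the domination $E^{L,\mu}_{\widetilde V}\le E^{L,\mu}_0$ and hence $E^{L,\mu}_V(x;c,s)\le e^{s\norms{V}_\infty}E^{L,\mu}_0(x;c,s)$. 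Substituting this and collapsing the $c$-integral by the Chapman--Kolmogorov identity $\int_M E^{L,\mu}_0(c;x,t-s)E^{L,\mu}_0(x;c,s)\dmu(c)=E^{L,\mu}_0(x;x,t)$ gives
\[
\abs{E^{L,\mu}_V(x;x,t)-E^{L,\mu}_0(x;x,t)}\le \norms{V}_\infty e^{\norms{V}_\infty}\,t\,E^{L,\mu}_0(x;x,t),\qquad (x,t)\in M\times(0,1].
\]

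Multiplying through by $t^{Q_L/2}$ and using Corollary~\ref{asdfjkl;115} bounds the left side by $\norms{V}_\infty e^{\norms{V}_\infty}k_L\,t$, which tends to $0$ uniformly in $x$; combined with \eqref{asdfjkl;103} this delivers the pointwise limit $t^{Q_L/2}E^{L,\mu}_V(x;x,t)\to\epsilon^{L,\mu}_0(x)$, and the same two inequalities give the uniform majorant $t^{Q_L/2}E^{L,\mu}_V(x;x,t)\le k_L\pars{1+\norms{V}_\infty e^{\norms{V}_\infty}}$ on $M\times(0,1]$. Dominated convergence then yields $t^{Q_L/2}\int_M E^{L,\mu}_V(x;x,t)\dmu(x)\to\int_M\epsilon^{L,\mu}_0\dmu$, which is the assertion of the theorem.

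I expect no serious obstacle here: the analytic difficulty is concentrated in Corollary~\ref{asdfjkl;115}, whose proof is carried out in section~2, and once that uniform on-diagonal bound is available the argument above is short. The only points requiring care are routine --- the validity of Duhamel's formula and of Chapman--Kolmogorov in this hypoelliptic, non-self-adjoint setting, and the identification of the $L^2$-trace with the diagonal integral of the kernel for a trace-class integral operator on a compact manifold --- all standard given the smooth positive heat kernel supplied by hypoellipticity. The conceptual content is simply that $V$ is a zeroth-order perturbation, so its contribution to $t^{Q_L/2}E^{L,\mu}_V(x;x,t)$ is $O(t)$ and vanishes in the limit, leaving the same leading term as in the $V\equiv 0$ case.
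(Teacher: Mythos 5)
Your proposal is correct and follows essentially the same route as the paper: reduce to the $V\equiv 0$ case by showing $|E^{L,\mu}_V(x;x,t)-E^{L,\mu}_0(x;x,t)|\leq C\,t\,E^{L,\mu}_0(x;x,t)$ on $M\times(0,1]$, then combine the pointwise limit (\ref{asdfjkl;103}) with the uniform bound of Corollary \ref{asdfjkl;115} and dominated convergence. The only difference is in how that perturbation estimate is obtained --- you use Duhamel's formula together with Chapman--Kolmogorov and the domination $E^{L,\mu}_{\widetilde V}\leq E^{L,\mu}_0$, whereas the paper derives the equivalent two-sided bound $e^{-t\max V}E^{L,\mu}_0\leq E^{L,\mu}_V\leq e^{-t\min V}E^{L,\mu}_0$ directly from the parabolic comparison principle with the constant potentials $\max V$ and $\min V$.
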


It is possible to have $\int \epsilon^{L,\mu}_0\dmu=0$, in which case the theorem gives only the rather imprecise result that $\Tr_{L^2}e^{-t(L+V)}=o(t^{-Q_L/2})$. However, according to Theorem \ref{asdfjkl;101} since $\epsilon^{L,\mu}_0=c^{L,\mu}_0\bfone_{F_{Q_L}}$, $\int \epsilon^{L,\mu}_0\dmu\neq 0$ if and only if $F_{Q_L}$ has positive measure, in which case $\int \epsilon^{L,\mu}_0\dmu> 0$. 

There is a certain sense in which Theorem \ref{asdfjkl;114} is somewhat less trivial than this short description of its proof suggests. For instance, the coefficients $\epsilon^{L,\mu}_{j/2}$ and remainders $R^{L,\mu}_{N/2}$ are continuous on level sets of $Q$ and smooth on their interiors, and the pointwise asymptotic equality $R^{L,\mu}_{N/2}(x,t)=o(t^{N/2})$ is uniform on compact subsets of level sets of $Q$ (see \cite{MR1011978} for proofs of these facts in the zero drift case). Thus, if $Q(\cdot)\equiv Q_L$ is constant then the half-integer terms are irrelevant and the remainder estimates are globally uniform, and therefore setting $i=j/2$ for even $j$ gives
\begin{equation}\label{asdfjkl;150}
\Tr_{L^2}e^{-tL}=\sum_{i=0}^N\pars{\int \epsilon^{L,\mu}_i\dmu}t^{i-Q_L/2}+o(t^{N-Q_L/2})
\end{equation}
for every $N\geq 0$, just as in the elliptic case ($Q(\cdot)\equiv Q_L=n$). However, if $Q$ is not constant then there seems to be no \textit{a priori} reason to suspect that the integrated remainders at order $N/2$ obey $\int R^{L,\mu}_{N/2}(\cdot, t)\dmu=o(t^{N/2})$. Therefore, one cannot na\"{i}vely expect (\ref{asdfjkl;150}) to be true if $Q$ is not constant since pathological effects near the boundaries of the level sets of $Q$ could conceivably contribute to the heat trace in the $t\rightarrow 0$ limit. 

In fact, this phenomenon does occur in the higher order terms of the heat trace for elliptic boundary value problems (\cite{MR2040963},\cite{MR0217739}), so in the sense that the function $Q$ will change discontinuously when nonconstant, there is all the more reason to suspect that perhaps similar effects might occur on the boundaries of the level sets of $Q$ - even though the problem is posed on a manifold without boundary. Therefore, one might explain Theorem \ref{asdfjkl;114} by saying that if $F_{Q_L}$ has positive measure then any such pathological effects cannot affect the term of lowest order in the heat trace, as is the case even with boundary value problems for elliptic operators. A heuristic explanation as to why this should be true is provided after the proof of Theorem \ref{asdfjkl;111}.

Concerning spectral asymptotics, our main result is the following Weyl law for the spectrum of $L+V$:

\begin{theorem}\label{asdfjkl;107} If $L$ is formally self-adjoint and if $N(\lambda,L+V)$ denotes the number of eigenvalues of $L+V$ which are not greater than $\lambda$, then the asymptotic equality
\begin{equation}\label{asdfjkl;139}
N(\lambda,L+V)= \frac{\int \epsilon^{L,\mu}_0\dmu}{\Gamma(Q_{L}/2+1)}\lambda^{Q_L/2}+o(\lambda^{Q_L/2})
\end{equation}
holds as $\lambda\rightarrow \infty$.\end{theorem}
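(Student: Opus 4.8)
The plan is to obtain \eqref{asdfjkl;139} from the heat-trace asymptotics of Theorem \ref{asdfjkl;114} by a Tauberian argument, after disposing of some routine spectral-theoretic preliminaries. First I would observe that, since $M$ is compact and $L+V$ is hypoelliptic, formally self-adjoint, and bounded below (its principal part $-\sum_i X_i^2$ is nonnegative and the remaining terms are of lower order), it admits a self-adjoint realization whose resolvent is compact --- e.g. the Friedrichs extension, whose form domain embeds compactly into $L^2(M,\mu)$ by the subelliptic estimate furnished by H\"{o}rmander's condition together with Rellich's lemma. Consequently $L+V$ has a discrete spectrum $\lambda_1\le\lambda_2\le\cdots\to+\infty$ (listed with multiplicity), the counting function $N(\lambda,L+V)$ is finite for every $\lambda$, the semigroup $e^{-t(L+V)}$ of this realization is trace class with integral kernel $E^{L,\mu}_V$ (which, by uniqueness of the hypoelliptic heat kernel on the closed manifold $M$, is the object of Theorem \ref{asdfjkl;114}, and which is symmetric in its physical arguments since $L+V$ is self-adjoint), and
\[
\Tr_{L^2}e^{-t(L+V)}=\sum_j e^{-t\lambda_j}=\int_{[0,\infty)}e^{-t\lambda}\,dN(\lambda,L+V).
\]
Adding a constant $c$ to $V$ multiplies the left-hand side by $e^{-tc}\to 1$ and translates $N(\cdot,L+V)$ by a bounded amount, so it alters neither the hypotheses nor the conclusion of the theorem; I may therefore assume $L+V\ge 0$, so that $N(\cdot,L+V)$ is a nondecreasing, nonnegative function supported on $[0,\infty)$.

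With these reductions, Theorem \ref{asdfjkl;114} reads $\int_{[0,\infty)}e^{-t\lambda}\,dN(\lambda,L+V)=A\,t^{-Q_L/2}+o(t^{-Q_L/2})$ as $t\to 0^+$, where $A=\int\epsilon^{L,\mu}_0\dmu\ge 0$. If $A>0$ I would apply Karamata's Tauberian theorem for Laplace--Stieltjes transforms of nonnegative Borel measures, with exponent $\rho=Q_L/2>0$ and constant (hence slowly varying) prefactor $A$, to conclude $N(\lambda,L+V)\sim \frac{A}{\Gamma(Q_L/2+1)}\lambda^{Q_L/2}$ as $\lambda\to\infty$, which is exactly \eqref{asdfjkl;139}. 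If $A=0$, Theorem \ref{asdfjkl;114} gives only $\Tr_{L^2}e^{-t(L+V)}=o(t^{-Q_L/2})$ and \eqref{asdfjkl;139} asserts $N(\lambda,L+V)=o(\lambda^{Q_L/2})$; this follows from an elementary monotonicity estimate rather than the full Tauberian theorem: taking $t=1/\lambda$ for $\lambda>0$ and using $e^{-\lambda'/\lambda}\ge e^{-1}$ on $[0,\lambda]$,
\[
N(\lambda,L+V)\le e\int_{[0,\lambda]}e^{-\lambda'/\lambda}\,dN(\lambda',L+V)\le e\int_{[0,\infty)}e^{-\lambda'/\lambda}\,dN(\lambda',L+V)=e\cdot o(\lambda^{Q_L/2}).
\]

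I do not expect the Tauberian step to be the real difficulty: that machinery is standard, and the genuine content of the theorem sits entirely in its input, Theorem \ref{asdfjkl;114}, hence in the heat-kernel analysis of Sections 2 and 3. Within the present argument, the only points that call for any care are the spectral-theoretic preliminaries --- pinning down the appropriate self-adjoint realization on the closed manifold $M$ and checking that its heat semigroup has $E^{L,\mu}_V$ as integral kernel so that the trace identity above is legitimate --- and the bookkeeping that allows one to normalize $L+V\ge 0$ without disturbing the leading asymptotic constant; neither of these constitutes a substantive obstacle.
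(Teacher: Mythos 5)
Your proposal is correct and follows essentially the same route as the paper: feed the heat-trace asymptotics of Theorem \ref{asdfjkl;114} into Karamata's Tauberian theorem, after disposing of the finitely many negative eigenvalues (you shift $V$ by a constant where the paper projects onto the positive spectrum via $P^+_\mu$ --- an immaterial difference). Your separate elementary treatment of the degenerate case $\int\epsilon^{L,\mu}_0\dmu=0$ is a careful touch the paper glosses over, but it does not change the substance of the argument.
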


Again, a necessary and sufficient condition for the positivity of the spectral coefficient $\int \epsilon^{L,\mu}_0\dmu$ is that $F_{Q_L}$ has positive measure. In the Riemannian case $Q_\Delta=n$, and computations involving elementary properties of the Riemannian metric and volume form demonstrate that $\epsilon_0^\Delta$ takes the constant value $(4\pi)^{-n/2}$, so one recovers the familiar Riemannian Weyl law in which the volume of the manifold determines the spectral coefficient. It seems that the correct interpretation of this idea in the present context is that $\int \epsilon^{L,\mu}_0 \dmu=\int c^{L,\mu}_0\bfone_{F_{Q_L}}\dmu$ is the volume of $F_{Q_L}$ in the volume density $h\mu$, where $h$ is any positive function which coincides with $c^{L,\mu}_0$ on $F_{Q_L}$. 
With this in mind, (\ref{asdfjkl;139}) can be rewritten as
\begin{equation}\label{asdfjkl;140}
N(\lambda,L+V)= \frac{\Vol_{h\mu}(F_{Q_L})}{\Gamma(Q_{L}/2+1)}\lambda^{Q_L/2}+o(\lambda^{Q_L/2})
\end{equation}
where $h$ coincides with $c^{L,\mu}_{0}$ on $F_{Q_L}$ and $\Vol_{h\mu}(\cdot)$ computes the volume of a Borel set in the density $h\mu$.

The spectral coefficient $\Vol_{h\mu}(F_{Q_L})/\Gamma(Q_L/2+1)$ exhibits interesting sensitivity to certain perturbations of the operator $L$. For instance, assuming that $L$ and $L'$ are both of the form (\ref{asdfjkl;100}) and that $L$ is self-adjoint, a short computation demonstrates that for any $\psi\in\scrC^\infty(M;\R)$, $T_\psi=L+\psi^2L'/2+(\psi^2L')^\ast/2$ is also self-adjoint and of the form (\ref{asdfjkl;100}) within a perturbation by a smooth real potential. In particular, (\ref{asdfjkl;140}) applies to $T_\psi+V$ for any $V\in\scrC^\infty(M;\R)$. Let us assume that $Q_{L}>Q_{L'}$ (this is the case if, for instance, $L'$ is elliptic and $L$ is not), whence
\[
N(\lambda,T_\psi+V) = \frac{\Vol_{h\mu}(F_{Q_L}\cap \{\psi=0\})}{\Gamma(Q_{
L}/2+1)}\lambda^{Q_L/2}+o(\lambda^{Q_L/2})
\]
where, as before, $h$ is any positive function which coincides with $c^{L,\mu}_{0}$ on $F_{Q_L}$. Thus, one can ``control" the spectral coefficient through suitable alteration of the support of $\psi$.

Before proving these results, let us comment on the existing literature related to this problem. Asymptotic laws for the spectra of \emph{elliptic} operators are known classically, beginning with the well-known work of H. Weyl for the Dirichlet problem in planar domains. The state of the art for elliptic problems is available in \cite{MR2040963}, among other places. Concerning the non-elliptic case, Menikoff and Sj\"{o}strand (\cite{MR547016},\cite{MR0481627},\cite{MR564905},\cite{MR555302},\cite{MR520875}) have obtained spectral asymptotics for a large class of hypoelliptic pseudodifferential operators not restricted to order two. However, they deal only with operators which are subelliptic with a loss of at most one derivative and our techniques apply to operators which can lose any number of derivatives strictly less than two (i.e. any number of derivatives strictly less than the order of the operator in question) in the Sobolev scale. 

The work of Fefferman and Phong (\cite{MR589278},\cite{MR730094}) and M{\'e}tivier (\cite{MR0427858}) is much more closely related to the present work. In \cite{MR730094}, Fefferman and Phong consider generic self-adjoint second order operators: $L=-a_{ij}(x)\partial_{x_j}\partial_{x_i}+b_i(x)\partial_{x_i}+c(x)$ locally (i.e. not simply those of the form $\sum X_i^\ast X_i+c$) with smooth coefficients which satisfy a subelliptic estimate of the form $\angles{Lu,u}+K\|u\|^2_{L^2}\geq \|u\|^2_{H^\epsilon}$. For  operators of this type they obtain the existence of positive constants $C_1\leq C_2$ such that 
\[
C_1\int \mu(B_L(x,\lambda^{-1/2}))^{-1}\dmu(x)\leq N(\lambda,L)\leq C_2\int \mu(B_L(x,\lambda^{-1/2}))^{-1}\dmu(x)
\]
for large $\lambda$. The notation $B_L$ indicates the subunit metric balls which are naturally associated to $L$ and which will be described in detail in section 2. Thus, whereas our Theorem \ref{asdfjkl;107} gives an exact spectral asymptotic law rather than simply an estimate, it applies to a much smaller class of operators than that which is considered in \cite{MR730094}. 

Finally, in \cite{MR0427858} M{\'e}tivier has obtained Theorem \ref{asdfjkl;107} for the Friedrichs extensions on precompact domains in general smooth manifolds (and thus on the entirety of a given manifold, provided that it is compact) of operators of the form $\sum_{i=1}^m X_i^\ast X_i+V$ under the additional assumption that the homogeneous dimension $Q$ is constant. Theorem \ref{asdfjkl;107} can therefore be seen as a more or less direct generalization of M{\'e}tivier's results in the case of compact manifolds. For the Dirichlet boundary value problem, an analogous generalization is presented in section 4. The author would like to thank his advisor, Fabrice Baudoin, for suggesting this topic of research.

\section{A Uniform Diagonal Estimate for $E^{L,\mu}_0$ in Small Times}
While the infinite differentiability of $t\mapsto t^{Q(x)/2}E^{L,\mu}_0(x;x,t)$ is a noteworthy fact, we will only be concerned with the existence and positivity of the pointwise limit $\epsilon^{L,\mu}_0(x)=\lim_{t\rightarrow 0}t^{Q_L/2}E^{L,\mu}_0(x;x,t)$. In this section we will prove that $t^{Q_L/2}E^{L,\mu}_0(x;x,t)$ is bounded uniformly in $M\times(0,1]$ so that the dominated convergence theorem can be used to compute the $t\rightarrow 0$ limit of the integral 
\[
\int_M t^{Q_L/2}E^{L,\mu}_0(x;x,t) \dmu=t^{Q_L/2}\Tr_{L^2} e^{-tL}.
\]

This estimate will be deduced as a corollary of the following theorem:

\begin{theorem}\label{asdfjkl;111}For any fixed $x_\ast\in M$, there exists an open neighborhood $N_{x_\ast}\ni x_\ast$ and a positive constant $k(x_\ast)>0$ such that 
$E^{L,\mu}_0(x;x,t)\leq k(x_\ast)t^{-Q(x_\ast)/2}$ provided that $x\in N_{x_\ast}$ and $0<t\leq 1$. 
\end{theorem}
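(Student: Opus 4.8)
The assertion is local at $x_\ast$, and it is convenient to reduce first to small $t$. Since $Q$ is upper semi-continuous and integer valued, $\{x:Q(x)>Q(x_\ast)\}=\{x:Q(x)\ge Q(x_\ast)+1\}$ is closed and misses $x_\ast$; fix a compact neighborhood $K$ of $x_\ast$ disjoint from it, so $Q\le Q(x_\ast)$ on $K$. If we produce constants $k_0,\rho\in(0,1]$ with $E^{L,\mu}_0(x;x,t)\le k_0\,t^{-Q(x_\ast)/2}$ for $x$ in the interior $N_{x_\ast}$ of $K$ and $0<t\le\rho^2$, we are done: $E^{L,\mu}_0$ is smooth, hence bounded by some $C'$ on $M\times[\rho^2,1]$, where $t^{-Q(x_\ast)/2}\ge1$, so $k(x_\ast)=\max(k_0,C')$ covers $\rho^2<t\le1$ as well.

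Write $L=-\sum_{i=1}^m X_i^2+X_0$ with $X_0=\sum_{i,j}c_{ij}[X_i,X_j]+\sum_i\gamma_iX_i$ a smooth vector field which, by its very form, lies in $T_{x,2}M$ at each $x$. The proof rests on two inputs. The first is a small-time on-diagonal heat kernel bound in terms of the control (Carnot--Carath\'{e}odory) balls $B_L(x,r)$ of the H\"{o}rmander system $X_1,\dots,X_m$: there exist $C,\rho>0$ so that
\[
E^{L,\mu}_0(x;x,t)\ \le\ C\,\mu\big(B_L(x,\sqrt{t})\big)^{-1}\qquad\text{for all }x\in M,\ \ 0<t\le\rho^2 .
\]
For symmetric sums of squares this is classical (S\'{a}nchez-Calle; Nagel--Stein--Wainger; or from Jerison's Poincar\'{e} inequality on the $B_L(x,r)$ together with Nagel--Stein--Wainger doubling and the Nash--Moser--Grigor'yan machinery). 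The drift $X_0$ does not affect it: the control geometry depends only on $X_1,\dots,X_m$, and because $X_0$ is subordinate to the weight-two layer of the filtration it does not upset the small-time diagonal behavior --- the same structural fact that underlies the positivity of $c^{L,\mu}_0$ in Theorem \ref{asdfjkl;101} --- so the estimate holds for operators of the present form (e.g.\ from Gaussian kernel estimates of Jerison--S\'{a}nchez-Calle / Kusuoka--Stroock type, or by a Duhamel perturbation off the zero-drift kernel).

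The second input is a uniform lower volume bound near $x_\ast$. By the Nagel--Stein--Wainger estimates, after shrinking $\rho$ and $K$ we have $\mu(B_L(x,r))\asymp\Lambda(x,r):=\sum_I|\lambda_I(x)|\,r^{\|I\|}$ uniformly for $x\in K$, $0<r\le\rho$, where $I$ ranges over the finitely many $n$-tuples of iterated brackets of $X_1,\dots,X_m$ of total weight $\|I\|$ at most $\tau_L n$ and $\lambda_I(x)$ is the determinant, in a fixed chart, of the matrix whose columns are the values at $x$ of the $n$ brackets in $I$. Grouping by weight, $\Lambda(x,r)=\sum_q\Lambda_q(x)\,r^q$ with $\Lambda_q(x):=\sum_{\|I\|=q}|\lambda_I(x)|\ge0$ continuous. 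A greedy choice of a spanning $n$-tuple of minimal total weight (take $d(x,1)$ brackets of weight $1$, then $d(x,2)-d(x,1)$ of weight $2$, and so on) shows $\min\{\|I\|:\lambda_I(x)\ne0\}=Q(x)$; hence $\sum_{q\le Q(x_\ast)}\Lambda_q(x)>0$ is equivalent to $Q(x)\le Q(x_\ast)$, which holds throughout $K$, so by compactness and continuity $\sum_{q\le Q(x_\ast)}\Lambda_q\ge\delta>0$ on $K$. Consequently, using $r^q\ge r^{Q(x_\ast)}$ for $q\le Q(x_\ast)$ and $0<r\le1$,
\[
\mu\big(B_L(x,r)\big)\ \gtrsim\ \Lambda(x,r)\ \ge\ \sum_{q\le Q(x_\ast)}\Lambda_q(x)\,r^q\ \ge\ \delta\,r^{Q(x_\ast)}\qquad (x\in K).
\]

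Combining the two inputs, for $x\in N_{x_\ast}$ and $0<t\le\rho^2$ (with $\rho\le1$),
\[
E^{L,\mu}_0(x;x,t)\ \le\ C\,\mu\big(B_L(x,\sqrt t)\big)^{-1}\ \lesssim\ \delta^{-1}\,t^{-Q(x_\ast)/2},
\]
and the reduction in the first paragraph disposes of $\rho^2<t\le1$; take $N_{x_\ast}$ to be the interior of $K$ and $k(x_\ast)$ the larger of the two resulting constants. The main obstacle is the first input: one must certify that the drift $X_0$ --- precisely the feature separating $L$ from a symmetric sub-Laplacian, and in general capable of destroying the small-time heat kernel --- is harmless here, and the only thing that makes it so is the special algebraic form of $L$, namely $X_0(x)\in T_{x,2}M$, exactly as in the proof of Theorem \ref{asdfjkl;101}; the volume-estimate input, by contrast, is a routine consequence of Nagel--Stein--Wainger together with the semicontinuity of $Q$.
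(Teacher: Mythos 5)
Your proposal is correct and follows essentially the same route as the paper: the Jerison--S\'anchez-Calle bound $E^{L,\mu}_0(x;x,t)\le C\mu(B_2(x,\sqrt t))^{-1}$, the Nagel--Stein--Wainger comparison of ball volumes with the polynomial $\Lambda(x,r)$, and a local lower bound $\Lambda(x,\sqrt t)\gtrsim t^{Q(x_\ast)/2}$ exploiting the upper semicontinuity of $Q$ near $x_\ast$. The only (immaterial) difference is in the last step: you bound $\Lambda$ from below by the uniformly positive partial sum $\sum_{q\le Q(x_\ast)}\Lambda_q$ on a compact neighborhood where $Q\le Q(x_\ast)$, whereas the paper fixes a single minimal multi-word $\calI^{x_\ast}$ with $\deg(\calI^{x_\ast})=Q(x_\ast)$ and uses the continuity of $\det(\calI^{x_\ast}|_x)$.
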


In other words, the estimate of the form $E^{L,\mu}_0(x_\ast;x_\ast,t)\leq c(x_\ast)t^{-Q(x_\ast)/2}$ for small $t$ which follows immediately from Theorem \ref{asdfjkl;101}, holds in a full open neighborhood of $x_\ast$ provided that the constant $c(x_\ast)$ is appropriately modified. Assuming for the moment that Theorem \ref{asdfjkl;111} holds, we can state and prove the aforementioned corollary:

\begin{cor} \label{asdfjkl;115} There exists an absolute constant $k_L>0$ such that 
\[
t^{Q_L/2}E^{L,\mu}_0(x;x,t)\leq k_L
\]
for all $x\in M$ and $0<t\leq 1$.\end{cor}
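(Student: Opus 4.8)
The plan is to obtain the corollary from Theorem \ref{asdfjkl;111} by an elementary compactness argument; essentially all of the analytic content has already been absorbed into that theorem. First, for each point $x_\ast\in M$, Theorem \ref{asdfjkl;111} supplies an open neighborhood $N_{x_\ast}\ni x_\ast$ together with a constant $k(x_\ast)>0$ such that $E^{L,\mu}_0(x;x,t)\leq k(x_\ast)t^{-Q(x_\ast)/2}$ whenever $x\in N_{x_\ast}$ and $0<t\leq 1$. The family $\{N_{x_\ast}:x_\ast\in M\}$ is an open cover of the compact manifold $M$, so I would extract a finite subcover $N_{x_1},\dots,N_{x_p}$ and set $k_L=\max\{k(x_1),\dots,k(x_p)\}$.

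Next I would verify that this choice of $k_L$ does the job. Fix an arbitrary $x\in M$ and $0<t\leq 1$, and pick an index $j$ with $x\in N_{x_j}$. Applying the bound from Theorem \ref{asdfjkl;111} and multiplying by $t^{Q_L/2}$ gives
\[
t^{Q_L/2}E^{L,\mu}_0(x;x,t)\leq k(x_j)\,t^{(Q_L-Q(x_j))/2}\leq k(x_j)\leq k_L,
\]
where the middle inequality uses that $Q(x_j)\leq Q_L=\max Q(\cdot)$, so the exponent $(Q_L-Q(x_j))/2$ is nonnegative, together with $0<t\leq 1$, which forces $t^{(Q_L-Q(x_j))/2}\leq 1$. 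Since $x$ and $t$ were arbitrary in $M\times(0,1]$, this is precisely the assertion of the corollary.

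The only subtlety worth flagging — and it is a very minor one — is the direction of the inequality in the last display: it is the restriction $t\leq 1$, not the smallness of $t$, that renders the nonnegative power of $t$ harmless, which is exactly why the corollary (and Theorem \ref{asdfjkl;111}) are stated on the interval $(0,1]$. Beyond this, there is no genuine obstacle: the passage from the pointwise-in-$x$, locally uniform estimate of Theorem \ref{asdfjkl;111} to the globally uniform estimate is a routine finite-cover argument, and the resulting constant $k_L$ is absolute in the sense that it depends only on $L$, $\mu$, and $M$.
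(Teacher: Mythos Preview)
Your proof is correct and matches the paper's own argument essentially line for line: both invoke Theorem \ref{asdfjkl;111} at each point, use $Q(x_\ast)\leq Q_L$ together with $0<t\leq 1$ to bound the resulting power of $t$ by one, and then pass to a finite subcover by compactness to obtain the global constant $k_L$. The only cosmetic difference is the order in which the two steps are presented.
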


\begin{proof}
From Theorem \ref{asdfjkl;111}, $t^{Q_L/2}E^{L,\mu}_0(x;x,t)\leq k(x_\ast)t^{Q_L/2-Q(x_\ast)/2}$ for all $x\in N_{x_\ast}$ provided $0<t\leq 1$. However, since $Q_L=\max Q(\cdot)$, the exponent $Q_L/2-Q(x_\ast)/2$ is nonnegative so that in fact $t^{Q_L/2}E^{L,\mu}_0(x;x,t)\leq k(x_\ast)$ for $0<t\leq 1$. Since $M$ is assumed to be compact, there must exist a finite set $\{x_1,\ldots , x_q\}\subset M$ such that $M\subset N_{x_1}\cup\ldots \cup N_{x_q}$. Thus, a generic point $x\in M$ is an element of $N_{x_l}$ for some $l\leq q$ and therefore the corollary holds with $k_L=\max \{ k(x_l): 1\leq l\leq q\}$.
\end{proof}

The rest of this section will be devoted to the proof of Theorem \ref{asdfjkl;111}. To begin the proof we recall the well known fact that the heat kernel $E^{L,\mu}_0$ can be controlled by the $\mu$-volumes of certain metric balls. To explain the situation, let us define for any connected open subset $U\subset M$, three sets of curves indexed by $\delta\geq 0$ and $x,y\in U$:
\begin{enumerate}
\item $C^U(x,y,\delta)$: this set consists of all absolutely continuous curves $c:[0,1]\rightarrow U$ such that $c(0)=x$, $c(1)=y$, and such that for almost every $t\in [0,1]$, $c'(t)$ can be written as $\sum_{|I|\leq \tau_L}a_I(t)X_I$ with $|a_I(t)|<\delta^{|I|}$,
\item $C^U_\infty(x,y,\delta)$: this set consists of all absolutely continuous curves $c:[0,1]\rightarrow U$ such that $c(0)=x$, $c(1)=y$, and such that for almost every $t\in [0,1]$, $c'(t)$ can be written as $\sum_{i=1}^m a_i(t)X_i$ with $\max_{1\leq i\leq m} |a_i(t)|<\delta$,
\item $C^U_2(x,y,\delta)$: this set consists of all absolutely continuous curves $c:[0,1]\rightarrow U$ such that $c(0)=x$, $c(1)=y$, and such that for almost every $t\in [0,1]$, $c'(t)$ can be written as $\sum_{i=1}^m a_i(t)X_i$ with $\sum_{i=1}^m |a_i(t)|^2<\delta^2$.
\end{enumerate}

The distance $\rho^U:U\times U\rightarrow [0,\infty]$ is defined by
\[
\rho^U(x,y)=\inf\{\delta>0:C^U(x,y,\delta)\neq \varnothing\}
\]
and $\rho^U_\infty,\rho^U_2:U\times U\rightarrow [0,\infty]$ are defined in an analogous manner.\footnote{This definition of the metric $\rho=\rho^U$ coincides with that which is given by Nagel, Stein and Wainger in \cite{MR793239}, and $\rho^U_\infty$ coincides with $\rho_4$ as defined in that article.} All three distances are in fact finite. This is clear for $\rho^U$ and it follows from the H\"{o}rmander condition and the Chow-Rashevskii theorem (\cite{MR1867362}) for $\rho^U_\infty$ and $\rho^U_2$. The associated metric balls will be denoted $B^U$, $B^U_\infty$ and $B^U_2$ (the balls $B^U_2$ are more or less the same as the subunit balls which were mentioned in the introduction). Moreover, if $x,y\in U$ then
\begin{equation}\begin{array}{rl}
& C^U_\infty(x,y,\delta/\sqrt{m})\subset C^U_2(x,y,\delta)\subset C^U_\infty(x,y,\delta) \subset C^U(x,y,\delta), \\
\mbox{so} &
\rho^U(x,y)\leq \rho^U_\infty(x,y)\leq \rho^U_2(x,y)\leq  \sqrt{m} \rho^U_\infty(x,y), \\
\mbox{and therefore} &
B^U_\infty(x,\delta/\sqrt{m})\subset B^U_2(x,\delta)\subset B^U_\infty(x,\delta)\subset B^U(x,\delta)
\end{array}\label{asdfjkl;600}\end{equation}
 for all $x,y\in U$, $\delta\geq 0$. 

Let the symbol $\calI$ denote a multi-word $\calI=\{I_1,\ldots ,I_n\}$ of length $n$. In other words, $\calI$ denotes a set of $n$ elements, each of which is itself a word $I_k\in\{1,\ldots, m\}^j$ which references an iterated Lie bracket $X_{I_k}$ as defined in the introduction. Let $\frakI$ denote the collection of all multi-words $\calI=\{I_1,\ldots ,I_n\}$ of length $n$ in which $|I_k|\leq \tau_L$ for each constituent word $I_k$ and define for every smooth coordinate system $(W, \{x_i\}_{i\leq n}\subset \scrC^\infty(W;\R))$ on $M$ a function $\Lambda(W,\{x_i\};\cdot,\cdot):W\times[0,\infty)\rightarrow [0,\infty)$ by
\[
\Lambda(W,\{x_i\};x,\delta)=\sum_{\calI\in\frakI}|\det(\calI|_x)|\delta^{\deg (\calI)},
\]
where for every $\calI=\{I_1,\ldots ,I_n\}\in\frakI$, $\deg(\calI)=|I_1| +\cdots +|I_n|$ and $\det(\calI|_x)=\det(X_{I_1}|_x,\ldots, X_{I_n}|_x)$ denotes the determinant of the matrix which brings the ordered coordinate basis $(\partial_{x_1}|_x,\ldots,\partial_{x_n}|_x)$ onto the ordered list $(X_{I_1}|_x,\ldots, X_{I_n}|_x)$. Many of these determinants will be zero, but the H\"{o}rmander condition guarantees that some of them will be nonzero, so $\Lambda(W,\{x_i\};x,\delta)$ is strictly positive and finite for all $x\in W $ and $0<\delta<\infty$.

For the remainder of the proof, let $x_\ast\in M$ denote a fixed yet arbitrary point in $M$. To prove Theorem \ref{asdfjkl;111} we must exhibit a neighborhood $N_{x_\ast}$ of $x_\ast$ and a constant $k(x_\ast)$ such that the conclusions of the theorem are verified. To do this, let $W^0_{x_\ast}$ and $W^1_{x_\ast}$ be connected open subsets of $M$ such that $x_\ast\in W^1_{x_\ast}\subset\subset W^0_{x_\ast}$ and such that $W^0_{x_\ast}$ is a coordinate domain with coordinates $\{x_i\}_{i\leq n}$. In \cite{MR865430}, it was proved that there exists $c_1>0$ such that $E^{L,\mu}_0(x;x,t)\leq c_1/\mu(B_2^M(x,\sqrt{t}))$ for all $(x,t)\in M\times (0,\infty)$.\footnote{The notation $B^M_2$ refers to the specific instance associated to $U=M$ of the generically constructed metric balls $B^U_2$.} Using this inequality, Theorem \ref{asdfjkl;111} will be proved by way of a rather lengthy factor-by-factor estimation of the righthand side of the following inequality:
\begin{align}
E^{L,\mu}_0(x;x,t) &\leq \frac{c_1}{\mu(B_2^M(x,\sqrt{t}))} \notag \\
&=c_1
\frac{\mu(B^{W^1_{x_\ast}}_2(x,\sqrt{t}))}{\mu(B^M_2(x,\sqrt{t}))}
\frac{|B^{W^1_{x_\ast}}_2(x,\sqrt{t})|}{\mu(B^{W^1_{x_\ast}}_2(x,\sqrt{t}))}
\frac{|B^{W^1_{x_\ast}}_\infty(x,\sqrt{t}/\sqrt{m})|}{|B^{W^1_{x_\ast}}_2(x,\sqrt{t})|} 
 \label{asdfjkl;601}\\
&\hspace{1cm}\times
\frac{\Lambda({W^1_{x_\ast}},\{x_i\};x,\sqrt{t})}{|B^{W^1_{x_\ast}}_\infty(x,\sqrt{t}/\sqrt{m})|}
\frac{1}{\Lambda({W^1_{x_\ast}},\{x_i\};x,\sqrt{t})}. \label{asdfjkl;602}
\end{align}
This inequality holds for $x\in W^1_{x_\ast}$ and $0<t<\infty$. To see this, observe that the coordinates $\{x_i\}$ restrict to $W^1_{x_\ast}$ and therefore $\Lambda(W^1_{x_\ast},\{x_i\},\cdot,\sqrt{t})=\Lambda(W^0_{x_\ast},\{x_i\},\cdot,\sqrt{t})$ on $W^1_{x_\ast}$. The notation $|\cdot|$ indicates the measure of a Borel set in the coordinate volume form $\rmd x_1\wedge\cdots \wedge \rmd x_n$, so the assumption that $W^1_{x_\ast}$ is compactly contained in $W^0_{x_\ast}$ ensures that all expressions involving $|\cdot|$ are finite and positive for $0<t<\infty$. Likewise, since the density $\mu$ is assumed to be smooth and nondegenerate, there exists a smooth function $h:W^0_{x_\ast}\rightarrow (0,\infty)$ such that $\mu=h\rmd x_1\wedge\cdots \wedge \rmd x_n$, so $h$ must be bounded above and below by positive numbers on $W^1_{x_\ast}$ and therefore all expressions involving $\mu(\cdot)$ are finite and positive for $0<t<\infty$. 

Now that it is clear that the righthand side is a well defined positive number for $0<t<\infty$, the validity of the inequality is easily seen since each expression which does not appear in the initial inequality $E^{L,\mu}_0(x;x,t)\leq c_1/\mu(B_2^M(x,\sqrt{t}))$ from \cite{MR865430} appears on the righthand side exactly once in both the numerator and denominator. We will estimate each factor in the righthand side in sequence, beginning with the first factor in line (\ref{asdfjkl;601}). Since $W^1_{x_\ast}\subset M$, $B^{W^1_{x_\ast}}_2(x,\sqrt{t})\subset B^M_2(x,\sqrt{t})$ so $\mu(B^{W^1_{x_\ast}}_2(x,\sqrt{t}))/\mu( B^M_2(x,\sqrt{t}))\leq 1$. Furthermore, a short computation shows that 
$|B^{W^1_{x_\ast}}_2(x,\sqrt{t})|/\mu(B^{W^1_{x_\ast}}_2(x,\sqrt{t}))\leq 1/\inf_{W^1_{x_\ast}} h$. Finally, from (\ref{asdfjkl;600}) it is clear that $B^{W^1_{x_\ast}}_\infty(x,\sqrt{t}/\sqrt{m})\subset B^{W^1_{x_\ast}}_2(x,\sqrt{t})$ so $|B^{W^1_{x_\ast}}_\infty(x,\sqrt{t}/\sqrt{m})|/|B^{W^1_{x_\ast}}_2(x,\sqrt{t})|\leq 1$; so after estimating each of the 
three factors we have the following estimate for the product in line (\ref{asdfjkl;601}):
\begin{equation}\label{asdfjkl;604}
c_1
\frac{\mu(B^{W^1_{x_\ast}}_2(x,\sqrt{t}))}{\mu(B^M_2(x,\sqrt{t}))}
\frac{|B^{W^1_{x_\ast}}_2(x,\sqrt{t})|}{\mu(B^{W^1_{x_\ast}}_2(x,\sqrt{t}))}
\frac{|B^{W^1_{x_\ast}}_\infty(x,\sqrt{t}/\sqrt{m})|}{|B^{W^1_{x_\ast}}_2(x,\sqrt{t})|} \leq\frac{c_1}{\inf_{W^1_{x_\ast}} h}
\end{equation}
for all $x\in W^1_{x_\ast}$ and all $t>0$.

Proceeding to the first factor in line (\ref{asdfjkl;602}), observe that in \cite{MR793239} Nagel, Stein and Wainger have proved that there exists an open set $W^2_{x_\ast}\subset W^1_{x_\ast}$ which contains $x_\ast$ such that $\rho^{W^1_{x_\ast}}_\infty(\xi,y)\leq C(W^2_{x_\ast})\rho^{W^1_{x_\ast}}(\xi,y)$ for some $C(W^2_{x_\ast})>0$ and all $\xi,y\in W^2_{x_\ast}$. Assuming henceforth that $x\in W^2_{x_\ast}$ and setting $\xi=x$ the Nagel, Stein, Wainger result shows that $\rho^{W^1_{x_\ast}}_\infty(x,y)\leq \delta$ for any $y\in B^{W^1_{x_\ast}}(x,\delta/C(W^2_{x_\ast}))\cap W^2_{x_\ast}$ and any $\delta>0$. In other words, $B^{W^1_{x_\ast}}(x,\delta/C(W^2_{x_\ast}))\cap W^2_{x_\ast}\subset B^{W^1_{x_\ast}}_\infty(x,\delta)$ so after writing $d_t=(\sqrt{t}/\sqrt{m})/C(W^2_{x_\ast})$ to condense notation this implies
\begin{align*}
\frac{\Lambda(W^1_{x_\ast},\{x_i\};x,\sqrt{t})}{|B^{W^1_{x_\ast}}_\infty(x,\sqrt{t}/\sqrt{m})|}
&=
\frac{|B^{W^1_{x_\ast}}(x,d_t)\cap W^2_{x_\ast}|}{|B^{W^1_{x_\ast}}_\infty(x,\sqrt{t}/\sqrt{m})|}
\frac{{|B^{W^1_{x_\ast}}(x,d_t)|}}{{|B^{W^1_{x_\ast}}(x,d_t)\cap W^2_{x_\ast}|}}\frac{\Lambda(W^1_{x_\ast},\{x_i\};x,\sqrt{t})}{|B^{W^1_{x_\ast}}(x,d_t)|}  \\
&\leq \frac{{|W^1_{x_\ast}|}}{{|B^{W^1_{x_\ast}}(x,\dist(x,\partial W^2_{x_\ast}))|}}\frac{\Lambda(W^1_{x_\ast},\{x_i\};x,\sqrt{t})}{|B^{W^1_{x_\ast}}(x,d_t)|}  
\end{align*}
since the first factor in the central expression is bounded by one and the second factor can be greater than one only when $B^{W^1_{x_\ast}}(x,d_t)$ intersects $\partial W^2_{x_\ast}$.\footnote{Here, $\dist$ refers to the distance induced by $\rho^{W^1_{x_\ast}}$.} Assuming henceforth that $x$ is restricted to any particular compact subset $K\subset W^2_{x_\ast}$, we can set $c_2(K)=\max_{K}|W^1_{x_\ast}|/|B^{W^1_{x_\ast}}(\cdot ,\dist(\cdot ,\partial W^2_{x_\ast}))|$ to estimate the first factor on the right uniformly. To estimate the second factor on the right, note that 
$\Lambda(W^1_{x_\ast},\{x_i\};x,\cdot)$ is a polynomial of bounded degree, so 
\[
\Lambda(W^1_{x_\ast},\{x_i\};x,\sqrt{t})=\Lambda(W^1_{x_\ast},\{x_i\};x,\sqrt{m}C(W^2_{x_\ast})d_t)\leq c_3(W^2_{x_\ast})\Lambda(W^1_{x_\ast},\{x_i\};x,d_t)
\] for some finite $c_3(W^2_{x_\ast})$ and therefore
\[
\frac{\Lambda(W^1_{x_\ast},\{x_i\};x,\sqrt{t})}{|B^{W^1_{x_\ast}}_\infty(x,\sqrt{t}/\sqrt{m})|}\leq 
c_2(K)c_3(W^2_{x_\ast})\frac{\Lambda(W^1_{x_\ast},\{x_i\};x,d_t)}{|B^{W^1_{x_\ast}}(x,d_t)|}.
\]
In the same article (\cite{MR793239}) Nagel, Stein and Wainger have also proved that for any compact $K'\subset W^1_{x_\ast}$, there exists a constant $c_4(K')$ such that 
\[
\Lambda(W^1_{x_\ast},\{x_i\};x,\delta)/|B^{W^1_{x_\ast}}(x,\delta)|\leq c_4(K')
\] for all $x\in K'$ and all $\delta>0$. Putting $\delta=d_t$, we have finally obtained the estimate 
\begin{equation}\label{asdfjkl;605}
\frac{\Lambda(W^1_{x_\ast},\{x_i\};x,\sqrt{t})}{|B^{W^1_{x_\ast}}_\infty(x,\sqrt{t}/\sqrt{m})|}\leq 
c_2(K)c_3(W^2_{x_\ast})c_4(K')
\end{equation}
for the first factor in line (\ref{asdfjkl;602}), provided that $x\in K\cap K'$ for any choice of compact sets $K\subset W^2_{x_\ast}$ and $K'\subset W^1_{x_\ast}$.

Proceeding to the second factor in line (\ref{asdfjkl;602}), we can write 
\[\frac{1}{\Lambda(W^1_{x_\ast},\{x_i\};x ,\sqrt{t})}=
\frac{\Lambda(W^1_{x_\ast},\{x_i\};x_\ast,\sqrt{t})}{\Lambda(W^1_{x_\ast},\{x_i\};x,\sqrt{t})}
\frac{1}{\Lambda(W^1_{x_\ast},\{x_i\};x_\ast,\sqrt{t})}\]
and estimate each factor on the right individually. In order to estimate 
\[
\Lambda(W^1_{x_\ast},\{x_i\};x_\ast,\sqrt{t})/\Lambda(W^1_{x_\ast},\{x_i\};x,\sqrt{t})
\]
consider the subcollection of multi-words $\frakI^{x_\ast}\subset \frakI$ such that $\det(\calI|_{x_\ast})\neq 0$ for $\calI\in \frakI^{x_\ast}$. Again, this is a nonempty subcollection according to the H\"{o}rmander condition. Within this subcollection, choose a multi-word $\calI^{x_\ast}=\{I^{x_\ast}_1,\ldots, I^{x_\ast}_n\}$ such that the associated exponent $\deg(\calI^{x_\ast})=|I^{x_\ast}_1|+\ldots+|I^{x_\ast}_n|$ is minimal (i.e. it is equal to $Q(x_\ast)$). Now with this choice of $\calI^{x_\ast}$ we have
\[
\frac{\Lambda(W^1_{x_\ast},\{x_i\};x_\ast,\sqrt{t})}{\Lambda(W^1_{x_\ast},\{x_i\};x,\sqrt{t})}=
\frac{\sum_{\calI\in\frakI}|\det(\calI|_{x_\ast})|t^{\deg(\calI)/2}}{\sum_{\calI\in\frakI}|\det(\calI|_x)|t^{\deg(\calI)/2}}
\leq 
\sum_{\calI\in\frakI^{x_\ast}}\frac{|\det(\calI|_{x_\ast})|t^{\deg(\calI)/2}}{|\det(\calI^{x_\ast}|_x)|t^{\deg(\calI^{x_\ast})/2}}
\]
on $\{x:\det(\calI^{x_\ast}|_{x})\neq 0\}$, which is an open neighborhood of $x_\ast$. Recall that $\calI^{x_\ast}$ was chosen so that the exponent $\deg(\calI^{\ast})$ is minimal among all $\calI\in\frakI^{x_\ast}$, and this means that \emph{every} exponent of $t$ which appears in the expression above is nonnegative, so that whenever $t^{\deg(\calI)/2}/t^{\deg(\calI^{x_\ast})/2}$ appears in this sum, it is bounded above by one provided that $t$ is. In other words, 
\[
\frac{\Lambda(W^1_{x_\ast},\{x_i\};x_\ast,\sqrt{t})}{\Lambda(W^1_{x_\ast},\{x_i\};x,\sqrt{t})}\leq 
\frac{\sum_{\calI\in\frakI^{x_\ast}}|\det(\calI|_{x_\ast})|}{|\det(\calI^{x_\ast}|_{x})|}
\]
on $\{x:\det(\calI^{x_\ast}|_{x})\neq 0\}$, provided that $0<t\leq 1$. Now since $\deg(\calI^{x_\ast})=Q(x_\ast)$,
\[
\Lambda(W^1_{x_\ast},\{x_i\};x_\ast,\sqrt{t})=\sum_{\calI\in \frakI^{x_\ast}}|\det(\calI|_{x_\ast})|t^{\deg(\calI)/2}\geq |\det(\calI^{x_\ast}|_{x_\ast})|t^{Q(x_\ast)/2}.
\]
From these two estimates follows the desired estimate of the second factor in line (\ref{asdfjkl;602}):
\begin{equation}\label{asdfjkl;606}
\frac{1}{\Lambda(W^1_{x_\ast},\{x_i\};x,\sqrt{t})}\leq 
\frac{\sum_{\calI\in\frakI^{x_\ast}}|\det(\calI|_{x_\ast})|}{|\det(\calI^{x_\ast}|_{x})|}\frac{1}{|\det(\calI^{x_\ast}|_{x_\ast})|t^{Q(x_\ast)/2}}
\end{equation}
in which the only $x$-dependent factor is $1/|\det(\calI^{x_\ast}|_{x})|$. Setting \\
$c_5(K'')=\max_{K''} |\det(\calI^{x_\ast}|_{(\cdot)})|^{-1}$ for any compact set $K''\subset \{x:\det(\calI^{x_\ast}|_{x})\neq 0\}$, we have after combining the estimates (\ref{asdfjkl;604}),(\ref{asdfjkl;605}) and (\ref{asdfjkl;606}) for lines (\ref{asdfjkl;601}) and (\ref{asdfjkl;602}), \begin{equation}
E^{L,\mu}_0(x;x,t)\leq\frac{ c_1c_2(K)c_3(W^2_{x_\ast})c_4(K')c_5(K'')\sum_{\calI\in\frakI^{x_\ast}}|\det(\calI|_{x_\ast})|}{(\inf_{W^1_{x_\ast}} h)|\det(\calI^{x_\ast}|_{x_\ast})|}t^{-Q(x_\ast)/2}
\end{equation}
for all $x\in K\cap K'\cap  K''$ and $0<t\leq 1$. This estimate holds, albeit with variable constants $c_2(K),c_3(W^2_{x_\ast}),c_4(K')$ and $c_5(K'')$, for any choice of compact sets $K\subset W^2_{x_\ast}$, $K'\subset W^1_{x_\ast}$ and $K''\subset \{x:\det(\calI^{x_\ast}|_{x})\neq 0\}$, so it is clear that $K,K'$ and $K''$ can be chosen so that $K\cap K'\cap K''$ contains a full open neighborhood $N_{x_\ast}$ of $x_\ast$. After such a choice has been made, setting
\[
k(x_\ast)=\frac{ c_1c_2(K)c_3(W^2_{x_\ast})c_4(K')c_5(K'')\sum_{\calI\in\frakI^{x_\ast}}|\det(\calI|_{x_\ast})|}{(\inf_{W^1_{x_\ast}} h)|\det(\calI^{x_\ast}|_{x_\ast})|}
\] completes the proof of Theorem \ref{asdfjkl;111}.

The underlying principle here is that the minimal number of brackets required to generate the tangent space at $x_\ast$ indicates the degree to which the operator $L$ will allow infinitessimal heat flow away from $x_\ast$: if more brackets are required to generate $T_{x_\ast}M$ than are required to generate $T_{x}M$ for $x$ near $x_\ast$ then it will be more difficult for heat to flow away from $x_\ast$ in small times and therefore it should be expected that $E^{L,\mu}_0(x_\ast;x_\ast,t)$ will have a more severe singularity as $t\rightarrow 0$ than $E^{L,\mu}_0(x;x,t)$, thus allowing $E^{L,\mu}_0(x;x,t)$ to be controlled in the $t\rightarrow 0$ limit by $E^{L,\mu}_0(x_\ast;x_\ast,t)$. On the other hand, as a simple consequence of the regularity of the vector fields $X_1,\ldots, X_k$, it is \emph{always} true that an equal or greater number of brackets are required to generate $T_{x_\ast}M$ than $T_xM$ for $x$ near $x_\ast$. This is the phenomenon which is borne out in the proof of the Theorem \ref{asdfjkl;111}, and in particular in the estimates which lead to (\ref{asdfjkl;606}). If $(X_{I^{x_\ast}_1}|_{x_\ast},\ldots, X_{I^{x_\ast}_n}|_{x_\ast})$ is a ``minimal frame" at $x_\ast$ in the sense that $\deg(\calI^{x_\ast})=Q(x_\ast)$, then it is \emph{not necessarily} a minimal frame at nearby points - but the redeeming feature is that, by continuity, the moving frame $x\mapsto (X_{I^{x_\ast}_1}|_{x},\ldots, X_{I^{x_\ast}_n}|_{x})$ must be nondegenerate in a full open neighborhood of $x_\ast$ and therefore it must  control the singularity of $E^{L,\mu}_0(x;x,t)$ as $t\rightarrow 0$ if $x$ is sufficiently near $x_\ast$. If a lower order frame exists at a point $x$ which is near $x_\ast$, it will only increase the propensity for heat to flow away from $x$, thereby making the diagonal singularity of $E^{L,\mu}_0(x;x,t)$ less severe and thus controllable by $E^{L,\mu}_0(x_\ast;x_\ast,t)$. This is what occurs if $x_\ast$ lies on the boundary of a level set of $Q$, for instance.

\section{Proofs of Theorems \ref{asdfjkl;114} and \ref{asdfjkl;107}}
For any $c\in\R$, $E_{c}^{L,\mu}=e^{-tc}E_0^{L,\mu}$. In particular, this applies for $c=\max V,\\\min V\in \R$ for any $V\in\scrC^\infty(M;\R)$. Elementary considerations involving extremal principals for parabolic operators demonstrate that
\[
e^{-t\max V}E_0^{L,\mu}=E_{\max V}^{L,\mu}\leq E_{V}^{L,\mu}\leq E_{\min V}^{L,\mu}=e^{-t\min V}E_0^{L,\mu}.
\]
Therefore,
\[
E_0^{L,\mu}-E^{L,\mu}_V\leq E^{L,\mu}_0-E_{\max V}^{L,\mu}=(1-e^{-t\max V})E^{L,\mu}_0=e^{-t\max V}(e^{t\max V}-1)E^{L,\mu}_0
\]
and by a similar computation, $E^{L,\mu}_V-E^{L,\mu}_0\leq (e^{-t\min V}-1)E^{L,\mu}_0$. Therefore, $|E^{L,\mu}_V-E^{L,\mu}_0|\leq e^{t\max |V|}(e^{t\max |V|}-1)E^{L,\mu}_0$. The first consequence of this estimate is the analog of Corollary \ref{asdfjkl;115} for nonzero $V\in\scrC^\infty(M;\R)$:
\begin{align}
t^{Q_L/2}E^{L,\mu}_V(x;x,t)&\leq t^{Q_L/2}|E^{L,\mu}_V(x;x,t)-E^{L,\mu}_0(x;x,t)|+t^{Q_L/2}E^{L,\mu}_0(x;x,t) \notag \\
&\leq e^{t\max |V|}(e^{t\max |V|}-1)t^{Q_L/2}E^{L,\mu}_0(x;x,t)+t^{Q_L/2}E^{L,\mu}_0(x;x,t) \notag \\
&\leq (e^{2\max |V|}+1)k_L\label{asdfjkl;119}
\end{align}
for $(x,t)\in M\times(0,1]$. Additionally, by Theorem \ref{asdfjkl;101},
\begin{align*}
|t^{Q_L/2}E^{L,\mu}_V(x;x,t)-\epsilon^{L,\mu}_0(x)|&\leq t^{Q_L/2}|E^{L,\mu}_V(x;x,t)-E^{L,\mu}_0(x;x,t)| \\
&\hspace{1cm}+|t^{Q_L/2}E^{L,\mu}_0(x;x,t)-\epsilon^{L,\mu}_0(x)| \\
&= t^{Q_L/2}|E^{L,\mu}_V(x;x,t)-E^{L,\mu}_0(x;x,t)|+|R_0^{L,\mu}(x,t)|\\
&\leq  e^{t\max |V|}(e^{t\max |V|}-1)t^{Q_L/2}E^{L,\mu}_0(x;x,t)+|R_0^{L,\mu}(x,t)| \\
&\leq  e^{t\max |V|}(e^{t\max |V|}-1)k_L+|R_0^{L,\mu}(x,t)| \\
&=o(1)
\end{align*}
pointwise as $t\rightarrow 0$. Thus, 
\begin{equation}\label{asdfjkl;120}
\lim_{t\rightarrow 0} t^{Q_L/2}E^{L,\mu}_V(x;x,t)=\epsilon^{L,\mu}_0(x)
\end{equation}
pointwise at every $x\in M$.

Using these estimates, we can prove Theorem \ref{asdfjkl;114} as follows:
\begin{align}
\lim_{t\rightarrow 0}t^{Q_L/2}\Tr_{L^2}e^{-t(L+V)}&= \lim_{t\rightarrow 0}\int t^{Q_L/2}E^{L,\mu}_V(x;x,t)\dmu(x)  \notag \\
&= \int \lim_{t\rightarrow 0}t^{Q_L/2}E^{L,\mu}_V(x;x,t)\dmu(x) \notag \\
&= \int \epsilon^{L,\mu}_0(x) \dmu(x) \label{asdfjkl;121}.
\end{align}
The first equality is a standard fact concerning kernels of trace class operators. The second equality is by dominated convergence, which is valid according to (\ref{asdfjkl;119}), and the third equality is a consequence of the pointwise limit (\ref{asdfjkl;120}). Now (\ref{asdfjkl;121}) is equivalent to the asymptotic equality 
\begin{equation}\label{asdfjkl;700}
\Tr_{L^2}e^{-t(L+V)}=\pars{\int \epsilon^{L,\mu}_0\dmu}t^{-Q_L/2}+o(t^{-Q_L/2})
\end{equation}
as $t\rightarrow 0$, and Theorem \ref{asdfjkl;114} is proved.

In Theorem \ref{asdfjkl;107}, $L$ is assumed to be self-adjoint and therefore $L+V$ is also self-adjoint since $V$ is real. If $P^+_\mu$ denotes the $L^2(M,\mu)$-orthonormal projection onto the eigenspaces of $L+V$ corresponding to strictly positive eigenvalues, then evidently
\begin{align*}
\lim_{\lambda\rightarrow \infty}N(\lambda,L+V)\lambda^{-Q_L/2}&=
\lim_{\lambda\rightarrow \infty}[N(\lambda,L+V)-N(0,L+V)]\lambda^{-Q_L/2}  \\
&=\frac{\lim_{t\rightarrow 0}t^{Q_L/2} \Tr_{L^2}(P^+_\mu e^{-t(L+V)})}{\Gamma(Q_L/2+1)}\\
&=\frac{\lim_{t\rightarrow 0}t^{Q_L/2} \Tr_{L^2} e^{-t(L+V)}}{\Gamma(Q_L/2+1)}\\
&=\frac{\int \epsilon^{L,\mu}_0\dmu}{\Gamma(Q_L/2+1)}.
\end{align*}
The first and third equalities hold because $L+V$ can have at most finitely many negative eigenvalues, the second equality is a routine application of Karamata's Tauberian theorem (see \cite{MR2273508}, e.g.), and the fourth equality follows from (\ref{asdfjkl;700}). This completes the proof of Theorem \ref{asdfjkl;107}.

\section{The Dirichlet Boundary Value Problem}
In this section we will consider the analogues of Theorems \ref{asdfjkl;114} and \ref{asdfjkl;107} for the Dirichlet boundary value problem in precompact domains in arbitrary smooth manifolds; i.e. $M$ will be a generic smooth manifold (not necessarily compact) and $L$ will be an operator on $M$ of the form
\begin{equation}\label{asdfjkl;131}
L=-\sum_{i=1}^{\infty}X_i^2 +\sum_{i,j=1}^\infty c_{ij}[X_i,X_j]+\sum_{i=1}^\infty \gamma_iX_i.
\end{equation}
It is assumed that the H\"{o}rmander condition is satisfied and that vector fields $\{X_i\}$ form a locally finite collection in the sense that every point in $M$ is contained in a neighborhood which intersects only finitely many of the supports of the $X_i$. Thus, there are no convergence issues involved with the infinite sums in (\ref{asdfjkl;131}), and furthermore any operator which is locally defined in the form (\ref{asdfjkl;131}) (with finite sums) can in fact be written in this form globally (with locally finite sums) within a perturbation by a smooth real potential. As before, $V$ denotes a generic element of $\scrC^\infty(M;\R)$ and $\mu$ denotes a generic smooth and nondegenerate volume density on $M$. 

For any connected open set $\Omega\subset \subset M$, the homogeneous dimension is defined just as it was in the compact case, except that we will only consider its values on $\ov{\Omega}$ (i.e. $Q_L^\Omega=\max_{\ov{\Omega}}Q(\cdot)$, $Q(\cdot)$ is locally bounded and integer-valued so the maximum is attained). The Dirichlet realization of $L+V$ on $L^2(\Omega,\mu)$ is constructed as follows. The scalar product on $\scrC^\infty_0(\Omega;\C)$ given by $(u,v)\mapsto \Re\angles{(L+V)u,v}_{L^2(\Omega,\mu)}+\kappa\angles{u,v}_{L^2(\Omega,\mu)}$ is sesquilinear for any $\kappa\in\R$ and positive-definite (hence Hermitian) provided that $\kappa$ is sufficiently large. In fact,  by the Poincar\'{e} inequality the resulting positive-definite scalar product is equivalent to $(u,v)\mapsto \sum_{i=1}^\infty\angles{X_iu,X_i v}_{L^2(\Omega, \mu)}$ (the sum is finite since $\ov{\Omega}$ is compact). Thus, we can define the anisotropic Sobolev space $H^1_0(\Omega,\mu)$ associated to $(L+V)|_{\scrC^\infty_0(\Omega;\C)}$ to be the completion of $\scrC^\infty_0(\Omega;\C)$ in the positive-definite Hermitian product $\angles{u,v}_{H^1_0(\Omega,\mu)}=\sum_{i=1}^\infty\angles{X_iu,X_i v}_{L^2(\Omega, \mu)}$.

Now for any $\kappa\in\R$, the sesquilinear scalar product \\$(u,v)\mapsto \angles{(L+V+\kappa)u,v}_{L^2(\Omega,\mu)}$ is continuous on $H^1_0(\Omega,\mu)$. Moreover, it is coercive provided that $\kappa$ is sufficiently large (as above), so the Lax-Milgram lemma ensures that $(L+V+\kappa)|_{\scrC^\infty_0(\Omega;\C)}$ extends to a continuous linear map of $H^1_0(\Omega,\mu)$ into its anti-dual $\ov{H^1_0(\Omega,\mu)}'$, which is a linear isomorphism for sufficiently large $\kappa$. However, $L^2(\Omega,\mu)$ embeds canonically into the anti-dual $\ov{H^1_0(\Omega,\mu)}'$ so the resulting closed operator $L+V+\kappa$ on $L^2(\Omega,\mu)$ with domain $\{u\in H^1_0(\Omega,\mu):(L+V+\kappa)u\in L^2(\Omega,\mu)\subset \ov{H^1_0(\Omega,\mu)}'\}$ is surjective for sufficiently large $\kappa$, and by R.S. Phillips' criterion (\cite{MR0104919}), this is enough to ensure that this extension of $-(L+V+\kappa)$ is maximally dissipative. 

It will also be of use to observe at this point that the adjoint of this realization of $L+V+\kappa$ has the same domain and is constructed from the formal adjoint of $L+V$ in the same manner. Thus, if $L$ is formally self-adjoint against the density $\mu$ (i.e. if it is symmetric on $\scrC^\infty_0(\Omega;\C)$) then this extension of $(L+V+\kappa)|_{\scrC^\infty_0(\Omega;\C)}$ is the Friedrichs' extension, in particular it is self-adjoint in $L^2(\Omega,\mu)$ for any $\kappa\in\R$.

Since $-(L+V+\kappa)$ with the given domain is maximally dissipative for sufficiently large $\kappa$, it  generates a contraction semigroup $e^{-t(L+V+\kappa)}$ on $L^2(\Omega,\mu)$ with smooth kernel $D_{V+\kappa}^{L,\mu}:\Omega\times \Omega\times (0,\infty)\rightarrow (0,\infty)$. The kernel for the semigroup $e^{-t(L+V)}$ corresponding to $\kappa=0$ (which may fail to be a contraction semigroup) is easily found to be $D_{V}^{L,\mu}(\cdot;\cdot,t)=e^{t\kappa}D_{V+\kappa}^{L,\mu}(\cdot;\cdot,t)$ and for $V\equiv 0$, Theorem \ref{asdfjkl;101} holds in $\ov{\Omega}$:
\begin{equation}\label{asdfjkl;147}
t^{Q^\Omega_L/2}D^{L,\mu}_0(x;x,t)=\sum_{j=0}^N \epsilon_{j/2}^{L,\mu}(x)t^{j/2}+S^{L,\mu}_{N/2}(x,t)
\end{equation}
and $S^{L,\mu}_{N/2}(x,t)=o(t^{N/2})$ pointwise. It is important to keep in mind that for any $x\in \Omega$, the coefficients $\epsilon_{j/2}^{L,\mu}(x)$ are the same as those described in the introduction for the global kernel $E^{L,\mu}_0$ on any compact manifold into which an open neighborhood of $x$ is embedded and on which $L$ and $\mu$ are smoothly extended,\footnote{This will be true provided that for a given extension, $Q$ is maximized on $\ov{\Omega}$ - if this is not the case then the coefficients will change by a translation in the index $j$. In any case this is a minor technicality.} they are determined only by the germ at $x$ of the coefficients of the operator and the volume density. 

To conclude this article we will prove for the Dirichlet realization of $L+V$ on $L^2(\Omega,\mu)$ the following analogues of Theorems \ref{asdfjkl;114} and \ref{asdfjkl;107}:

\begin{theorem} \label{asdfjkl;135} If $e^{-t(L+V)}$ denotes the semigroup generated by the Dirichlet realization of $L+V$ on $L^2(\Omega,\mu)$ then the asymptotic equality 
\begin{equation} 
\Tr_{L^2(\Omega,\mu)}e^{-t(L+V)}=\pars{\int_\Omega \epsilon^{L,\mu}_0\dmu}t^{-Q_L^\Omega/2}+o(t^{-Q_L^\Omega/2})
\end{equation}
holds as $t\rightarrow 0$.
\end{theorem}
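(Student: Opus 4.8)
The plan is to transfer the machinery of Sections~2 and~3 to the Dirichlet setting; essentially the only new ingredient needed is a Dirichlet analogue of Corollary~\ref{asdfjkl;115}. First I would dispose of the potential exactly as in Section~3. Since $D_c^{L,\mu}=e^{-tc}D_0^{L,\mu}$ for every constant $c$, the parabolic comparison principle applied to the Dirichlet realizations of $L+\min V\leq L+V\leq L+\max V$ (all with vanishing boundary data and the same initial mass $\delta_x$) gives $e^{-t\max V}D_0^{L,\mu}\leq D_V^{L,\mu}\leq e^{-t\min V}D_0^{L,\mu}$, hence $|D_V^{L,\mu}-D_0^{L,\mu}|\leq e^{t\max|V|}(e^{t\max|V|}-1)D_0^{L,\mu}$ verbatim as in Section~3. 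Consequently it suffices to establish a uniform bound $t^{Q_L^\Omega/2}D_0^{L,\mu}(x;x,t)\leq k$ on $\Omega\times(0,1]$ together with the pointwise limit $\lim_{t\to 0}t^{Q_L^\Omega/2}D_0^{L,\mu}(x;x,t)=\epsilon_0^{L,\mu}(x)$ for $x\in\Omega$; the latter is immediate from (\ref{asdfjkl;147}) with $N=0$.

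For the uniform bound I would embed a neighborhood of $\ov{\Omega}$ in a compact manifold $\wti{M}$, extending $L$ to an operator $\wti{L}$ of the form (\ref{asdfjkl;100}) and $\mu$ to a smooth nondegenerate density $\wti{\mu}$, with $\wti{L}=L$ and $\wti{\mu}=\mu$ on an open set containing $\ov{\Omega}$; this is possible since $\ov{\Omega}$ is compact and the $X_i$ form a locally finite family. Because $\wti{L}$ restricts to $L$ on $\Omega$, the functions $(y,t)\mapsto D_0^{L,\mu}(x;y,t)$ and $(y,t)\mapsto E_0^{\wti{L},\wti{\mu}}(x;y,t)$ solve the same parabolic equation on $\Omega\times(0,\infty)$ with the same initial mass $\delta_x$, while their difference equals $E_0^{\wti{L},\wti{\mu}}\geq 0$ on $\partial\Omega\times(0,\infty)$; the parabolic minimum principle for $\wti{L}+\partial_t$ then yields the domain-monotonicity bound $D_0^{L,\mu}(x;y,t)\leq E_0^{\wti{L},\wti{\mu}}(x;y,t)$ on $\Omega\times(0,\infty)$, in particular on the diagonal. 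Now the homogeneous dimension of $\wti{L}$ agrees with that of $L$ on $\ov{\Omega}$, so Theorem~\ref{asdfjkl;111} applied to $\wti{L}$ furnishes, for each $x_\ast\in\ov{\Omega}$, a neighborhood $N_{x_\ast}$ and a constant $k(x_\ast)$ with $E_0^{\wti{L},\wti{\mu}}(x;x,t)\leq k(x_\ast)t^{-Q(x_\ast)/2}\leq k(x_\ast)t^{-Q_L^\Omega/2}$ for $x\in N_{x_\ast}$ and $0<t\leq 1$; covering the compact set $\ov{\Omega}$ by finitely many such $N_{x_\ast}$ produces a constant $k_\Omega$ with $t^{Q_L^\Omega/2}D_0^{L,\mu}(x;x,t)\leq t^{Q_L^\Omega/2}E_0^{\wti{L},\wti{\mu}}(x;x,t)\leq k_\Omega$ for all $x\in\Omega$, $0<t\leq 1$. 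Together with the potential estimate this gives $t^{Q_L^\Omega/2}D_V^{L,\mu}(x;x,t)\leq(e^{2\max|V|}+1)k_\Omega$, the analogue of (\ref{asdfjkl;119}).

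With both facts in hand the proof concludes as in Section~3. The potential comparison promotes the pointwise limit to $\lim_{t\to 0}t^{Q_L^\Omega/2}D_V^{L,\mu}(x;x,t)=\epsilon_0^{L,\mu}(x)$ at every $x\in\Omega$; and since $e^{-t(L+V)}=e^{t\kappa}e^{-t(L+V+\kappa)}$ is trace class on $L^2(\Omega,\mu)$ with $\Tr_{L^2(\Omega,\mu)}e^{-t(L+V)}=\int_\Omega D_V^{L,\mu}(x;x,t)\dmu(x)$, multiplying by $t^{Q_L^\Omega/2}$ and passing to the limit under the integral --- legitimate by dominated convergence, since $\mu(\Omega)<\infty$ and the integrand is bounded by the constant just obtained --- gives $\lim_{t\to 0}t^{Q_L^\Omega/2}\Tr_{L^2(\Omega,\mu)}e^{-t(L+V)}=\int_\Omega\epsilon_0^{L,\mu}\dmu$, which is equivalent to the asserted asymptotic equality.

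I expect the main obstacle to be the domain-monotonicity step of the second paragraph: one must check carefully that the parabolic minimum principle genuinely applies to $\wti{L}+\partial_t$ --- an operator that carries first-order drift terms and is only hypoelliptic, not elliptic --- so that agreement of $D_0^{L,\mu}$ and $E_0^{\wti{L},\wti{\mu}}$ in their initial data together with the sign of their difference on $\partial\Omega\times(0,\infty)$ forces the pointwise inequality. (A subsidiary point is to confirm that $\wti{L}$ can be taken to coincide with $L$ on a two-sided neighborhood of $\ov{\Omega}$, so that the homogeneous dimensions agree at boundary points of $\Omega$ and the coefficients $\epsilon_{j/2}^{L,\mu}$ are the intrinsic ones of (\ref{asdfjkl;147}) rather than shifted in the index $j$.) Everything else is a routine transcription of the arguments already given in Sections~2 and~3.
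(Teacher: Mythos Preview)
Your proposal is correct and follows essentially the same route as the paper: reduce to $V\equiv 0$ via the parabolic comparison principle (using that the Dirichlet kernels all vanish on the lateral boundary), embed $\ov{\Omega}$ into a compact manifold with extended $L$ and $\mu$, compare $D_0^{L,\mu}$ with the global kernel $E_0$ via the extremal principle, and invoke the uniform diagonal estimate from Section~2 to justify dominated convergence. The paper additionally modifies the extended operator outside $\ov{\Omega}$ so that $Q$ is maximized on $\ov{\Omega}$ --- this is exactly the ``subsidiary point'' you flag, and it is needed so that the $\epsilon^{L,\mu}_{j/2}$ of the global expansion (\ref{asdfjkl;124}) match those of (\ref{asdfjkl;147}) without an index shift; with that adjustment in place the paper then simply quotes Corollary~\ref{asdfjkl;115}, whereas you re-derive the bound by covering $\ov{\Omega}$ with finitely many neighborhoods from Theorem~\ref{asdfjkl;111}, which amounts to the same thing.
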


\begin{theorem}\label{asdfjkl;136} If $L$ is formally self-adjoint in $\Omega$ and $N(\lambda,L+V)$ denotes the number of eigenvalues of the Friedrichs extension of $(L+V)|_{\scrC^\infty_0(\Omega;\C)}$ which are not greater than $\lambda$, then the asymptotic equality
\[
N(\lambda, L+V)= \frac{\int_\Omega \epsilon^{L,\mu}_0\dmu}{\Gamma(Q_L^\Omega/2+1)}\lambda^{Q_L^\Omega/2}+o(\lambda^{Q_L^\Omega/2})
\]
holds as $\lambda\rightarrow \infty$.
\end{theorem}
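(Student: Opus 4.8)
The plan is to repeat, almost verbatim, the passage from Theorem \ref{asdfjkl;114} to Theorem \ref{asdfjkl;107}, now with Theorem \ref{asdfjkl;135} playing the role of Theorem \ref{asdfjkl;114}; the only genuinely new ingredients are the spectral-theoretic prerequisites for the Dirichlet realization, all of which are already in hand from the construction at the beginning of this section. First I would record that, when $L$ is formally self-adjoint against $\mu$, the realization constructed above is the Friedrichs extension, hence self-adjoint in $L^2(\Omega,\mu)$, and that it is bounded below: by coercivity (the Poincar\'e inequality) there is $\kappa\in\R$ with $L+V+\kappa\ge 0$, so $\spec(L+V)\subset[-\kappa,\infty)$. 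Next I would note that $L+V$ has purely discrete spectrum accumulating only at $+\infty$: by Theorem \ref{asdfjkl;135} the operator $e^{-t(L+V)}$ has finite trace for every $t>0$, hence is trace class, hence compact, and a bounded-below self-adjoint operator $A$ for which $e^{-tA}$ is compact has compact resolvent. In particular $L+V$ has only finitely many eigenvalues in $(-\infty,0]$, counted with multiplicity; call their number $\nu$.

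Then I would let $P^+$ denote the $L^2(\Omega,\mu)$-orthogonal projection onto the span of the eigenspaces of $L+V$ with strictly positive eigenvalue, and write $\Tr_{L^2(\Omega,\mu)}\!\big(P^+e^{-t(L+V)}\big)=\Tr_{L^2(\Omega,\mu)}e^{-t(L+V)}-\sum_{\lambda_j\le 0}e^{-t\lambda_j}$, the last sum having at most $\nu$ terms, each tending to $1$ as $t\to 0$, hence being $O(1)$. Combining with Theorem \ref{asdfjkl;135} gives $\Tr_{L^2(\Omega,\mu)}(P^+e^{-t(L+V)})=\pars{\int_\Omega\epsilon^{L,\mu}_0\dmu}t^{-Q_L^\Omega/2}+o(t^{-Q_L^\Omega/2})$ as $t\to 0$. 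Since $\Tr_{L^2(\Omega,\mu)}(P^+e^{-t(L+V)})=\int_{(0,\infty)}e^{-t\lambda}\,d\big[N(\lambda,L+V)-N(0,L+V)\big]$, and $\lambda\mapsto N(\lambda,L+V)-N(0,L+V)$ is nondecreasing, right-continuous, and vanishes on $(-\infty,0]$, Karamata's Tauberian theorem (exactly as invoked in the proof of Theorem \ref{asdfjkl;107}) yields $N(\lambda,L+V)-N(0,L+V)=\frac{\int_\Omega\epsilon^{L,\mu}_0\dmu}{\Gamma(Q_L^\Omega/2+1)}\lambda^{Q_L^\Omega/2}+o(\lambda^{Q_L^\Omega/2})$ as $\lambda\to\infty$; adding the fixed constant $N(0,L+V)\le\nu$, which is absorbed into the error term, gives the claim. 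In the degenerate case $\int_\Omega\epsilon^{L,\mu}_0\dmu=0$ one uses the form of Karamata's theorem with limiting constant zero, namely that $\int e^{-t\lambda}\,dF(\lambda)=o(t^{-\alpha})$ implies $F(\lambda)=o(\lambda^{\alpha})$.

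There is no real obstacle here: the argument is a transcription of that of Theorem \ref{asdfjkl;107}, and the semiboundedness and self-adjointness of the Friedrichs extension were already established at the start of this section. If one point deserves care it is the discreteness of $\spec(L+V)$, i.e. the compactness of $e^{-t(L+V)}$ — but this is immediate from Theorem \ref{asdfjkl;135}, since a trace-class operator is compact, so that even this step is routine.
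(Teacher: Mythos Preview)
Your proposal is correct and follows precisely the route the paper takes: the paper states that Theorem \ref{asdfjkl;136} ``will follow as a direct corollary of Theorem \ref{asdfjkl;135}'' by the argument of section 3, i.e.\ the passage from Theorem \ref{asdfjkl;114} to Theorem \ref{asdfjkl;107} via Karamata's Tauberian theorem, which is exactly what you have written out. You have merely supplied more detail on the spectral prerequisites (semiboundedness, discreteness, finiteness of the nonpositive spectrum) than the paper does, but the approach is the same.
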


Again, a necessary and sufficient condition for the positivity of the nonnegative coefficient $\int_\Omega \epsilon^{L,\mu}_0\dmu$ is that $\{Q(\cdot)=Q_L^\Omega\}$ has positive measure in $\Omega$. Furthermore, we can assume without loss of generality that $M$ is compact to begin with, for if not then the problem can be reconstructed on a domain in a compact manifold as follows.  There exists a smoothly-bounded and connected domain $\Omega'$ such that $\Omega\subset \subset \Omega'\subset \subset M$, thus $\Omega\subset \Omega'_+\subset \ov{\Omega'_+}\uplus \ov{\Omega'_-}$ where $\ov{\Omega'_+}\uplus \ov{\Omega'_-}$ denotes the closed double of $\ov{\Omega'}$. The operator $L$ and volume density $\mu$ can then be smoothly extended from $\Omega\subset \Omega'_+$ to the entirety of $\ov{\Omega'_+}\uplus \ov{\Omega'_-}$. Furthermore, the set $\{Q(\cdot)\leq Q^\Omega_L\}$ is open in $\ov{\Omega'_+}\uplus \ov{\Omega'_-}$ and it contains the compact set $\ov{\Omega}$, and this means that the extended operator $L$ can be further modified if necessary in the complement of $\ov{\Omega}$ so that $Q$ is maximized on $\ov{\Omega}$. This will ensure that if $E^{L,\mu}_0$ denotes the heat kernel on the compact manifold $\ov{\Omega'_+}\uplus \ov{\Omega'_-}$ associated to these extensions of $L$ and $\mu$, then the coefficients $\epsilon_{j/2}^{L,\mu}$ appearing in (\ref{asdfjkl;124}) (which holds for $E^{L,\mu}_0$ since $\ov{\Omega'_+}\uplus \ov{\Omega'_-}$ is compact) and (\ref{asdfjkl;147}) coincide on $\ov{\Omega}$.

Now Theorem \ref{asdfjkl;136} will follow as a direct corollary of Theorem \ref{asdfjkl;135}, which in turn will follow for nonzero $V\in\scrC^\infty(M;\R)$ if it is proved for the Dirichlet realization of $L$ alone, with no potential. These facts are proved for $\Omega$ as in section 3 where they are proved for compact $M$. In particular the parabolic extremal principle plays the same role here as it does in section 3, except that in this case the parabolic boundary will contain nontrivial lateral components since $\partial \Omega$ is nonempty. However, the inequality
\[
e^{-t\max_{\ov{\Omega}} V}D_0^{L,\mu}=D_{\max_{\ov{\Omega}} V}^{L,\mu}\leq D_{V}^{L,\mu}\leq D_{\min_{\ov{\Omega}} V}^{L,\mu}=e^{-t\min_{\ov{\Omega}} V}D_0^{L,\mu}
\]  
will still hold because the Dirichlet boundary condition forces all of these heat kernels to vanish on the lateral boundary. The problem is thus reduced to bounding $t^{Q_L^\Omega/2}D^{L,\mu}_0(x;x,t)$ by a constant, uniformly for $(x,t)\in \Omega\times (0,1]$.

Since the domain $\Omega$ is an open manifold, there is no straightforward way to get such an estimate as there is in the compact case. However, the desired estimate can be deduced by comparing $D^{L,\mu}_0$ with the global heat kernel $E^{L,\mu}_0$. Since the global kernel is positive, the difference $E^{L,\mu}_0-D^{L,\mu}_0$ is nonnegative on the entire parabolic boundary. This means that the parabolic extremal principle can be applied yet again to demonstrate that $E^{L,\mu}_0\geq D^{L,\mu}_0$ in the entirety of $\Omega\times (0,\infty)$ whence $t^{Q_L^\Omega/2}D^{L,\mu}_0(x;x,t)\leq t^{Q_L^\Omega/2}E^{L,\mu}_0(x;x,t)\leq k_L^\Omega$ in $\Omega\times (0,1]$ by Corollary \ref{asdfjkl;115}, which holds for the global kernel $E^{L,\mu}_0$. Theorems \ref{asdfjkl;135} and \ref{asdfjkl;136} follow from this by proofs which are analogous to those presented in section 3 for Theorems \ref{asdfjkl;114} and \ref{asdfjkl;107}, respectively.

\bibliographystyle{amsalpha}

\bibliography{alubib}

\def\cprime{$'$}
\providecommand{\bysame}{\leavevmode\hbox to3em{\hrulefill}\thinspace}
\providecommand{\MR}{\relax\ifhmode\unskip\space\fi MR }
% \MRhref is called by the amsart/book/proc definition of \MR.
\providecommand{\MRhref}[2]{%
  \href{http://www.ams.org/mathscinet-getitem?mr=#1}{#2}
}
\providecommand{\href}[2]{#2}
\begin{thebibliography}{BAL91b}

\bibitem[BA88]{MR962859}
G{\'e}rard Ben~Arous, \emph{Noyau de la chaleur hypoelliptique et g\'eom\'etrie
  sous-riemannienne}, Stochastic analysis ({P}aris, 1987), Lecture Notes in
  Math., vol. 1322, Springer, Berlin, 1988, pp.~1--16. \MR{962859 (89k:35054)}

\bibitem[BA89]{MR1011978}
\bysame, \emph{D\'eveloppement asymptotique du noyau de la chaleur
  hypoelliptique sur la diagonale}, Ann. Inst. Fourier (Grenoble) \textbf{39}
  (1989), no.~1, 73--99. \MR{1011978 (91b:58272)}

\bibitem[BAL91a]{MR1128069}
G.~Ben~Arous and R.~L{\'e}andre, \emph{D\'ecroissance exponentielle du noyau de
  la chaleur sur la diagonale. {I}}, Probab. Theory Related Fields \textbf{90}
  (1991), no.~2, 175--202. \MR{1128069 (93b:60136a)}

\bibitem[BAL91b]{MR1133372}
\bysame, \emph{D\'ecroissance exponentielle du noyau de la chaleur sur la
  diagonale. {II}}, Probab. Theory Related Fields \textbf{90} (1991), no.~3,
  377--402. \MR{1133372 (93b:60136b)}

\bibitem[BGV04]{MR2273508}
Nicole Berline, Ezra Getzler, and Mich{\`e}le Vergne, \emph{Heat kernels and
  {D}irac operators}, Grundlehren Text Editions, Springer-Verlag, Berlin, 2004,
  Corrected reprint of the 1992 original. \MR{2273508 (2007m:58033)}

\bibitem[FP80]{MR589278}
C.~Fefferman and D.~H. Phong, \emph{On the asymptotic eigenvalue distribution
  of a pseudodifferential operator}, Proc. Nat. Acad. Sci. U.S.A. \textbf{77}
  (1980), no.~10, part 1, 5622--5625. \MR{589278 (82h:35101)}

\bibitem[FP83]{MR730094}
\bysame, \emph{Subelliptic eigenvalue problems}, Conference on harmonic
  analysis in honor of {A}ntoni {Z}ygmund, {V}ol. {I}, {II} ({C}hicago, {I}ll.,
  1981), Wadsworth Math. Ser., Wadsworth, Belmont, CA, 1983, pp.~590--606.
  \MR{730094 (86c:35112)}

\bibitem[Gil04]{MR2040963}
Peter~B. Gilkey, \emph{Asymptotic formulae in spectral geometry}, Studies in
  Advanced Mathematics, Chapman \& Hall/CRC, Boca Raton, FL, 2004. \MR{2040963
  (2005f:58041)}

\bibitem[JSC86]{MR865430}
David~S. Jerison and Antonio S{\'a}nchez-Calle, \emph{Estimates for the heat
  kernel for a sum of squares of vector fields}, Indiana Univ. Math. J.
  \textbf{35} (1986), no.~4, 835--854. \MR{865430 (88c:58064)}

\bibitem[Men79]{MR547016}
A.~Menikoff, \emph{Hypoelliptic operators with double characteristics}, Seminar
  on {S}ingularities of {S}olutions of {L}inear {P}artial {D}ifferential
  {E}quations ({I}nst. {A}dv. {S}tudy, {P}rinceton, {N}.{J}., 1977/78), Ann. of
  Math. Stud., vol.~91, Princeton Univ. Press, Princeton, N.J., 1979,
  pp.~65--79. \MR{547016 (81i:35036)}

\bibitem[M{\'e}t76]{MR0427858}
Guy M{\'e}tivier, \emph{Fonction spectrale et valeurs propres d'une classe
  d'op\'erateurs non elliptiques}, Comm. Partial Differential Equations
  \textbf{1} (1976), no.~5, 467--519. \MR{0427858 (55 \#888)}

\bibitem[Mon02]{MR1867362}
Richard Montgomery, \emph{A tour of subriemannian geometries, their geodesics
  and applications}, Mathematical Surveys and Monographs, vol.~91, American
  Mathematical Society, Providence, RI, 2002. \MR{1867362 (2002m:53045)}

\bibitem[MS67]{MR0217739}
H.~P. McKean, Jr. and I.~M. Singer, \emph{Curvature and the eigenvalues of the
  {L}aplacian}, J. Differential Geometry \textbf{1} (1967), no.~1, 43--69.
  \MR{0217739 (36 \#828)}

\bibitem[MS78a]{MR520875}
A.~Menikoff and J.~Sj{\"o}strand, \emph{The eigenvalues of hypoelliptic
  operators}, \'{E}quations aux d\'eriv\'ees partielles ({P}roc. {C}onf.,
  {S}aint-{J}ean-de-{M}onts, 1977), Lecture Notes in Math., vol. 660, Springer,
  Berlin, 1978, pp.~157--163. \MR{520875 (82b:35120)}

\bibitem[MS78b]{MR0481627}
\bysame, \emph{On the eigenvalues of a class of hypoelliptic operators}, Math.
  Ann. \textbf{235} (1978), no.~1, 55--85. \MR{0481627 (58 \#1735)}

\bibitem[MS79a]{MR555302}
\bysame, \emph{The eigenvalues of hypoelliptic operators. {III}. {T}he
  nonsemibounded case}, J. Analyse Math. \textbf{35} (1979), 123--150.
  \MR{555302 (82m:35115)}

\bibitem[MS79b]{MR564905}
\bysame, \emph{On the eigenvalues of a class of hypoelliptic operators. {II}},
  Global analysis ({P}roc. {B}iennial {S}em. {C}anad. {M}ath. {C}ongr., {U}niv.
  {C}algary, {C}algary, {A}lta., 1978), Lecture Notes in Math., vol. 755,
  Springer, Berlin, 1979, pp.~201--247. \MR{564905 (82m:35114)}

\bibitem[NSW85]{MR793239}
Alexander Nagel, Elias~M. Stein, and Stephen Wainger, \emph{Balls and metrics
  defined by vector fields. {I}. {B}asic properties}, Acta Math. \textbf{155}
  (1985), no.~1-2, 103--147. \MR{793239 (86k:46049)}

\bibitem[Phi59]{MR0104919}
R.~S. Phillips, \emph{Dissipative operators and hyperbolic systems of partial
  differential equations}, Trans. Amer. Math. Soc. \textbf{90} (1959),
  193--254. \MR{0104919 (21 \#3669)}

\bibitem[RS76]{MR0436223}
Linda~Preiss Rothschild and E.~M. Stein, \emph{Hypoelliptic differential
  operators and nilpotent groups}, Acta Math. \textbf{137} (1976), no.~3-4,
  247--320. \MR{0436223 (55 \#9171)}

\bibitem[Tak88]{MR944857}
Satoshi Takanobu, \emph{Diagonal short time asymptotics of heat kernels for
  certain degenerate second order differential operators of {H}\"ormander
  type}, Publ. Res. Inst. Math. Sci. \textbf{24} (1988), no.~2, 169--203.
  \MR{944857 (89i:35026)}

\end{thebibliography}

\end{document}